\newcommand*\compl[1]{\overline{#1}}
\newtheorem{theorem}{Theorem}[section]
\newtheorem{corollary}[theorem]{Corollary}
\newtheorem{lemma}[theorem]{Lemma}
\newtheorem{proposition}[theorem]{Proposition}
\theoremstyle{definition}
\newtheorem{definition}[theorem]{Definition}
\newtheorem{example}[theorem]{Example}
\newtheorem{remark}[theorem]{Remark}
\begin{document}

\title{Graph sensitivity under join and decomposition}
\author{Cathy Kriloff}%$^{\ast}$}
\address{%$^{\ast}$Corresponding author \\
Professor Emerita, Department of Mathematics and Statistics\\
Idaho State University \\
Pocatello, ID 83209-8085}
\email{cathykriloff@isu.edu}
\author{Jacob Tolman}
\address{
Department of Mathematics and Computer Science \\
Wesleyan University \\
Middletown, CT 06469}
\email{jtolman@wesleyan.edu}

\begin{abstract}
The sensitivity, $\sigma(G)$, of a finite undirected simple graph $G$ is the smallest maximum degree of an induced subgraph on more than the maximum number of independent vertices.
Call an indexed family of graphs $G_n$, with maximum degree $\Delta(G_n) \to \infty$ as $n \to \infty$, sensitive if $\sigma(G_n) \to \infty$ and insensitive otherwise.
These definitions have their roots in Huang's resolution of the Sensitivity Conjecture for Boolean functions via determining sensitivity for the hypercube graphs and subsequent investigations of other Cayley graph families.
Here we describe sensitivity under the join operation and decomposition into stable blocks and construct sensitive and insensitive, primarily non-regular, graph families.
We determine the sensitivity explicitly for numerous singly- and doubly-indexed graph families, including certain generalized joins  - e.g., complete multipartite graphs and some generalized windmill graphs; general rooted products; and families of corona graphs. 
\end{abstract}

\date{}

\keywords{graph sensitivity, join, stable block decomposition, graph families}

\subjclass{Primary: 05C76; Secondary: 05C75}

\maketitle

\section{Introduction}
\label{intro}
The idea of graph sensitivity traces back to work 
related to Boolean functions and the hypercube graph (\cite{Gotsman-Linial-92,CFGS-88}), but has been formalized and studied for other graphs following Huang's clever and concise resolution in~\cite{Huang-19} of the sensitivity conjecture for Boolean functions~\cite{Nisan-Szegedy-94}.
The hypercube graph, $Q_n$, which has vertices the $n$-tuples in $\mathbb{Z}_2^n$ with two vertices adjacent if they differ in exactly one coordinate, is bipartite so its \textit{independence number} is $\alpha(Q_n)=\frac{1}{2} 2^n=2^{n-1}$.
Huang proved the smallest maximum degree of an induced subgraph of $Q_n$ on $\alpha(Q_n)+1$ vertices is at least $\sqrt{n}$ and the construction in~\cite{CFGS-88} implies it is equal to $\left\lceil \sqrt{n} \,\right\rceil$.
Huang further suggests studying, for other highly symmetric graphs, the smallest maximum degree of an induced subgraph of a graph $G$ on $\alpha(G)+1$ vertices, defined as the \textit{sensitivity} $\sigma(G)$ in~\cite{GarciaMarco-Knauer-22} by Garc\`{i}a-Marco and Knauer.

Since $Q_n$ is the Cayley graph on $\mathbb{Z}_2^n$ with connection set $S=\{e_1,\dots,e_n\}$ it is natural to investigate the sensitivity of other Cayley graphs.  For example, in~\cite{Alon-Zheng-20} Alon and Zheng proved $\sigma(\mathrm{Cay}(\mathbb{Z}_2^n,S)) \geq \sqrt{|S|}$ for any connection set $S$.  In~\cite{Potechin-Tsang-20}, Potechin and Tsang proved a corresponding result for all Cayley graphs of abelian groups using $|V_G|/2$ rather than $\alpha(G)$ as the cutoff for the number of vertices, so they prove that $\sqrt{|S|/2}$ provides a lower bound on sensitivity for bipartite graphs.  They conjectured that for every Cayley graph $\mathrm{Cay}(G,S)$, the maximum degree of an induced subgraph on more than half the vertices is at least $\sqrt{|S|/2}$, but Lehner and Verret provided an infinite family of counterexamples in~\cite{Lehner-Verret-20}.
Sensitivity and related properties have also been investigated by several authors for the natural generalization of hypercubes to Hamming graphs $H_{n,q}=\mathrm{Cay}(\mathbb{Z}_q^n,\{\pm e_1,\dots,\pm e_n\})$ for $q \geq 3$ (see Theorem 8.1 in~\cite{GarciaMarco-Knauer-22} as well as~\cite{Dong-21,Tandya-22,Potechin-Tsang-24,AFGMK-25}).

The definition of sensitivity of an indexed graph family and three additional infinite families of counterexamples to the Potechin and Tsang conjecture appear in~\cite{GarciaMarco-Knauer-22} by Garc\`{i}a-Marco and Knauer.  All of these families have unbounded degree but sensitivity equal to one, which they refer to as \textit{insensitive} families. 
The Hamming graph families $\{H_{n,q}\}_{n=1}^{\infty}$ for $q \geq 3$ and $\{H_{n,q}\}_{q=3}^{\infty}$ are also insensitive since $\Delta(H_{n,q})=(q-1)n$ and for $q \geq 3$ Tandya constructs an induced subgraph of maximum degree one on $\alpha(H_{n,q})+1$ vertices in~\cite{Tandya-22}.  

In this paper we formalize definitions from~\cite{GarciaMarco-Knauer-22} and~\cite{Huang-19}, describe graph sensitivity under the join operation and a partition into stable blocks, and apply these results to give general constructions of sensitive and insensitive graph families.  We investigate the sensitivity of several explicit, mostly non-regular, families, some of which serve as models for citation, transportation, or neural networks, and find sensitive and insensitive subfamilies occurring within singly- and doubly-indexed families of graphs in a variety of ways. We are aware of two other references that consider sensitivity of nonregular families using a graph operation. Both use a spectral approach as in~\cite{Huang-19} to analyze Cartesian products: of connected signed graphs one of which is bipartite in \cite{HLL-21}, and of paths in~\cite{ZH-24}.

More details of the contents of this paper are as follows.
In~\cref{defs} after preliminary definitions we present a few examples of sensitive and insensitive graph families, and the definition of \textit{$k$-sensitivity}, $\sigma_k(G)$, of a nonempty graph $G$ for $1 \leq k \leq |V_G|-\alpha(G)$ as the minimum of the maximum degrees of induced subgraphs on $\alpha(G)+k$ vertices (see the second concluding remark in~\cite{Huang-19} and Question (5.1) in~\cite{ZH-24}).

Results about joins are in~\cref{joins}.  We describe the sensitivity of the join of any two graphs in~\cref{sensitivity-join-edges}, and apply this to the join of two or more copies of a graph (\cref{doublegraph}) 
and to the join with a complete graph (\cref{Kn-join-G}). In~\cref{n-cone gensensitivity} we specialize to describe the sensitivity of the join of an empty graph, $\compl{K}_n$, with any graph $G$ in terms of $n$, the $k$-sensitivities of $G$, and/or $\alpha(G)$ - depending on how $n$ compares to $|V_G|$ and $\alpha(G)$ - including further refining $\sigma(\compl{K}_n \vee G)$ when $n\leq \alpha(G)$.  The results about $\sigma(\compl{K}_n \vee G)$ are summarized in~\cref{number line}.  In addition, when $G$ is a regular graph of degree $d$ on $m$ vertices and $n\geq 2m-d-1$ we show $\sigma(\compl{K}_n \vee G)=n-m+d+1$ (\cref{n-cone-lower-bound}).
\cref{joins} also includes some general constructions and numerous examples.  For instance, we show that joining a family of empty graphs to a graph (\cref{sens-n-cones}) and joining an empty graph to a sensitive family (\cref{n-cone-sens-families}) both result in a sensitive family.
We compute sensitivity and/or determine (in)sensitivity for several indexed families, including certain generalized joins - e.g., complete bipartite and multipartite graphs (\cref{complete-bipartite-sens,complete-multipartite-sens,complete-reg-multipartite}) and some generalized windmill graphs (\cref{windmill,genwindmill}).

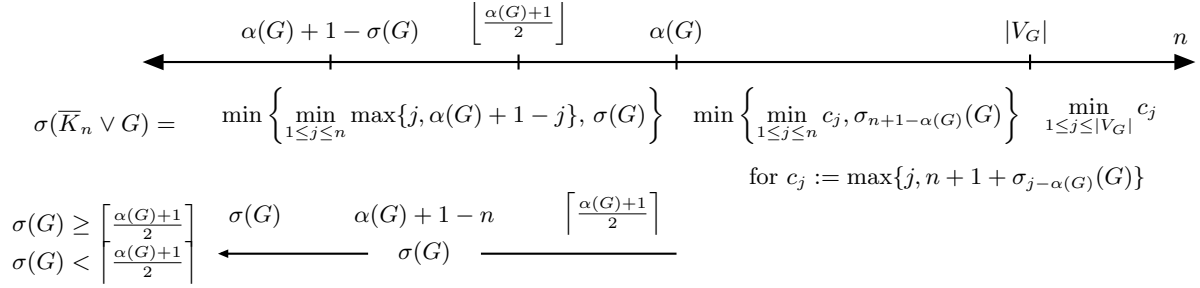
\begin{figure}
%\hspace*{\dimexpr-\textwidth/5}
\centering
  \begin{tikzpicture}

    % draw the number line 
    \draw[thick, <->, >={Stealth[inset=0pt, length=8pt]}] (-7,0) -- (7,0);

    % draw tick marks
    \foreach \x in {-4.5,-2,0.1,4.8} {
        \draw[thick] (\x,0.1) -- (\x,-0.1);
      }

    % labels above tick marks
    \node[above] at (-4.5,0.1) {\footnotesize{$\alpha(G)+1-\sigma(G)$}};
    \node[above] at (-2,0.1) {\footnotesize{$\left\lfloor\frac{\alpha(G)+1}{2}\right\rfloor$}};
    \node[above] at (0.1,0.1) {\footnotesize{$\alpha(G)$}};

    \node[above] at (4.75,0.1) {\footnotesize{$|V_G|$}};

    \node[above] at (6.8,0.1) {\footnotesize{$n$}};

    % label below tick marks
    \node[below, text = %blue!80!white
    ] at (-7.5,-0.5) {\footnotesize{$\sigma(\compl{K}_n \vee G)=$}};
    \node[below, text = %blue!80!white
    ] at (-3,-0.2) {\footnotesize{$\min \left\{ \min\limits_{1 \leq j \leq n} \max \{j, \alpha(G)+1-j\},  \,\sigma(G)\right\}$}};
    \node[below, text = %blue!80!white
    ] at (2.5, -0.2) {\footnotesize{$\min \left\{ \min\limits_{1 \leq j \leq n} c_j, \sigma_{n+1 - \alpha(G)}(G) \right\}$}};
    \node[below, text = %blue!80!white
    ] at (5.75, -0.35) {\footnotesize{$\min\limits_{1 \leq j \leq |V_G|} c_j$}};

    \node[below] at (-7.5,-1.75) {\footnotesize{$\sigma(G)\geq\left\lceil\frac{\alpha(G)+1}{2}\right\rceil$}};
    \node[below, text = %blue!80!white
    ] at (-5.5,-1.75) {\footnotesize{$\sigma(G)$}};
    \node[below, text = %blue!80!white
    ] at (-3.25,-1.75) {\footnotesize{$\alpha(G)+1-n$}};
    \node[below, text = %blue!80!white
    ] at (-0.75,-1.6) {\footnotesize{$\left\lceil\frac{\alpha(G)+1}{2}\right\rceil$}};
    \node[below%, text = blue!80!white
    ] at (3.7,-1.25) {\footnotesize{for $c_j:=\max\{j,n+1-j+\sigma_{j-\alpha(G)}(G)\}$}};

    \node[below] at (-7.5,-2.25) {\footnotesize{$\sigma(G)<\left\lceil\frac{\alpha(G)+1}{2}\right\rceil$}};
    \node[below, text = %blue!80!white
    ] at (-3.25,-2.25) {\footnotesize{$\sigma(G)$}};

    % draw arrows
    \draw[thick, <-, %green!50!gray, 
    >={Stealth[inset=0pt, length=4pt]}] (-6,-2.55) -- (-4,-2.55);
    \draw[thick, -%, green!50!gray
    ] (-2.5,-2.55) -- (0.1,-2.55);
  \end{tikzpicture}
\caption{Summary of values for sensitivity of $\compl{K}_n \vee G$ in~\cref{n-cone gensensitivity}.  \\
(If $\sigma(G)<\frac{\alpha(G)+1}{2}$ then $\alpha(G)+1-\sigma(G)$ and $\left\lfloor\frac{\alpha(G)+1}{2}\right\rfloor$ are interchanged.)}
\label{number line}
\end{figure}

In~\cref{stable blocks} we instead consider examples like rooted products and coronas that allow a decomposition into stable blocks -- sets of vertices for which the independence number is determined on the interior of the subgraph they induce (\cref{sb-def}). In~\cref{stable-block-partition} we reduce the computation of sensitivity to the stable blocks when they partition a graph.

\medskip
\noindent
\textbf{\cref{stable-block-partition}}
\textit{If the vertex set of a graph $G$ can be partitioned into stable blocks $V_1,\dots,V_n$ then $\sigma(G)=\min\limits_{1 \leq i \leq n} \sigma(G[V_i])$.}
\medskip

\noindent While a partition into stable blocks is likely quite rare, we provide a construction that results in such a partition and can be used to build a variety of examples, e.g., by using \textit{rooted products} (\cref{rp-def}) in which each root avoids some maximum independent set.  
We show the sensitivity of a general rooted product is at most the minimum sensitivity of its components (\cref{rooted-product}).
For a rooted product of a graph $G$ with copies of $K_1 \vee H$ for some graph $H$ - i.e., for the \textit{corona} $G \odot H$ - 
we show in~\cref{corona-sens} that when $H$ is nonempty $\sigma(G \odot H) = \sigma(H)$ and when $H$ is empty $\sigma(G \odot H)=|V_H|$.
This reduction allows for computing sensitivity of families built using the corona operation from families with known sensitivity and constructing a family with a prescribed sequence of sensitivities.  

\cref{Concluding remarks} provides some context on our results and possible further directions.

\section{Preliminaries}
\label{defs}

We will be working with \textit{finite undirected simple} graphs $G$ and call the number of vertices in $G$ its \textit{order}. For $n \geq 1$ we will denote the \textit{complete graph} on $n$ vertices as $K_n$ and the \textit{empty graph} containing no edges as $\compl{K}_n$. For convenience we write $K_1$ for the graph with one vertex and no edges. We denote the \emph{maximum degree} of $G$ by $\Delta(G)$, or $\Delta$ when $G$ is understood, and by $d$ when $G$ is \emph{regular} with degree $d$ at each vertex.

\begin{definition}
  If $G = (V,E)$ is a graph and $S \subseteq V$ is a subset of vertices, the \emph{induced subgraph} with vertex set $S$ (or \emph{subgraph induced by $S$}), denoted $G[S]$, is the graph with vertex set $S$, and an edge between $u$ and $v$ if and only if $u$ and $v$ are adjacent in $G$.  At times, for simplicity of notation $S$ might be used if the context makes it clear that $G[S]$ is meant. 
\end{definition}

The well-studied \emph{independence number} of a graph $G$, denoted $\alpha(G)$, is the maximum order of an empty induced subgraph of $G$.  The sensitivity of $G$ measures the minimum amount by which the maximum degree must increase when considering induced subgraphs on more than $\alpha(G)$ vertices.

\begin{definition}
  \label{sens}
  The \emph{sensitivity} of a nonempty graph $G = (V,E)$ is the number
  \[
    \sigma(G) \coloneqq \min \{\Delta(G[S]) : S \subseteq V \text{ and } |S| > \alpha(G)\}\,,
  \]
  and we define $\sigma(\compl{K}_n)=n$.\footnote{We thank Martin Rubey for proposing $\sigma(\compl{K}_n)=n$ as more natural than $\infty$ since then adding a dominant vertex to an empty graph preserves sensitivity (see~\cref{star}), and for adding graph sensitivity to the database findstat.org.}
  An indexed family of graphs $G_n$ such that $\Delta(G_n) \to \infty$ as $n \to \infty$ is \emph{sensitive} if also $\sigma(G_n)\to \infty$ and is \emph{insensitive} if not.
\end{definition}

The definition of the sensitivity of a graph and what it means for an indexed graph family to be sensitive or insensitive are motivated by Huang's result~\cite{Huang-19} on hypercubes and the discussion in~\cite{GarciaMarco-Knauer-22}.
To compute $\sigma(G)$ for a nonempty graph it suffices to consider vertex sets with $|S|=\alpha(G)+1$ since the maximum degree can only increase when using a larger vertex set, but all possible such sets must be considered, not just those formed by adding one vertex to a maximum order empty induced subgraph.  
Defining $\sigma(\compl{K}_n)$ to be $n$ allows us to state~\cref{sensitivity-join-edges} uniformly for all graphs.
However, we do not consider the family of empty graphs to be sensitive or insensitive and in general the condition that $\Delta(G_n) \to \infty$ as $n \to \infty$ distinguishes insensitive graph families from those where $\sigma(G_n)$ is bounded because $\Delta(G_n)$ is bounded.

\begin{example}
\label{Kn-Pn-Cn}
  The family of \textit{complete graphs} $K_n$ on $n \geq 2$ vertices is insensitive since
  \[\Delta(K_n)=n-1, \hbox{ and } \alpha(K_n)=1 \hbox{ implies } \sigma(K_n)=1.\]
However, the family of \textit{path graphs} $P_n$ for $n \geq 4$ has
  \[\alpha(P_n)=\left\lceil \frac{n}{2} \right\rceil
    \hbox{ and }
    \sigma(P_n)=1\]
and the \textit{cycle graphs}, $C_n$, for $n \geq 5$ have
  \[
    \alpha(C_n)=\left\lfloor \frac{n}{2} \right\rfloor
    \hbox{ and }
    \sigma(C_n)=1\,,\]
but neither of these families is considered insensitive since $\Delta(P_n)=\Delta(C_n)=2$. 
\end{example}

The initial motivating example of a sensitive family is the hypercube family $Q_n$, for which $\sigma(Q_n)\geq \left\lceil \sqrt{n} \,\right\rceil$~\cite{Huang-19,CFGS-88}.
To introduce two simple examples of sensitive non-regular families, recall that the \textit{join} $G \vee H$ of two graphs $G$ and $H$ is the disjoint union of the two graphs together with all possible edges between them. A natural and useful first example is star graphs.  

\begin{example}
  \label{star}
  The \textit{star graphs} $K_{1,n}=K_1\vee \compl{K}_n$ are formed by taking the join with a single vertex, or the \textit{cone}, of a complete graph and are easily seen to form a sensitive family with
  \[\Delta(K_{1,n})=\alpha(K_{1,n})=\sigma(K_{1,n})=n.\]
\end{example}

The following lemma will be helpful when analyzing the sensitivity of joins.

\begin{lemma}
  \label{independence-num-join}
  Given graphs $G_1$ and $G_2$, for $G_1 \vee G_2$ we have
  \begin{align*}
    \Delta(G_1 \vee G_2) & = \max \{\Delta(G_1)+|V_{G_2}|,\Delta(G_2)+|V_{G_1}|\}, \hbox{ and} \\
    \alpha(G_1 \vee G_2) & = \max \{\alpha(G_1),\alpha(G_2)\}.
  \end{align*}
\end{lemma}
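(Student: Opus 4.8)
The plan is to read both quantities directly off the definition of the join, treating the two vertex classes $V_{G_1}$ and $V_{G_2}$ separately. For the maximum degree, I would first note that in $G_1 \vee G_2$ a vertex $v \in V_{G_1}$ keeps all of its neighbors in $G_1$ and acquires an edge to every one of the $|V_{G_2}|$ vertices of $G_2$, and these are all of its neighbors; hence $\deg_{G_1 \vee G_2}(v) = \deg_{G_1}(v) + |V_{G_2}|$. Maximizing over $v \in V_{G_1}$ gives $\Delta(G_1) + |V_{G_2}|$, and symmetrically the maximum over $w \in V_{G_2}$ is $\Delta(G_2) + |V_{G_1}|$. Since $V_{G_1} \cup V_{G_2}$ is a disjoint union and at least one class is nonempty, $\Delta(G_1 \vee G_2)$ is the larger of these two numbers, which is the claimed formula.

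For the independence number, the key observation is that an independent set $S$ of $G_1 \vee G_2$ cannot contain a vertex from $V_{G_1}$ and a vertex from $V_{G_2}$ simultaneously, because every such cross pair is adjacent in the join; so $S \subseteq V_{G_1}$ or $S \subseteq V_{G_2}$. Because the subgraph of $G_1 \vee G_2$ induced on $V_{G_i}$ is exactly $G_i$, such an $S$ is an independent set of $G_i$, so $|S| \le \alpha(G_i)$; taking the maximum over the two cases gives $\alpha(G_1 \vee G_2) \le \max\{\alpha(G_1),\alpha(G_2)\}$. The reverse inequality is immediate, since any maximum independent set of $G_1$ (resp.\ $G_2$) remains independent in $G_1 \vee G_2$, so $\alpha(G_1 \vee G_2) \ge \alpha(G_i)$ for $i = 1,2$.

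I do not anticipate any genuine obstacle here; the argument is essentially a matter of careful bookkeeping. The only minor point to handle cleanly is that the two separate maxima over $V_{G_1}$ and $V_{G_2}$ really do combine into a single maximum over all vertices of $G_1 \vee G_2$, which relies on the standing assumption that the graphs are nonempty (so the max is taken over a nonempty set in each case); the degenerate cases such as $G_2 = K_1$ are covered without change, and the convention $\sigma(\compl{K}_n) = \infty$ is not needed for this lemma.
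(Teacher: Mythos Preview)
Your proposal is correct and follows exactly the same approach as the paper: both arguments rest on the observation that every vertex of $G_1$ is adjacent to every vertex of $G_2$ in the join, which immediately gives the degree formula and forces any independent set to lie entirely in one side. The paper's proof is simply a terser version of what you wrote, compressing your detailed bookkeeping into two sentences.
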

\begin{proof}
  These follow because in $G_1 \vee G_2$ each vertex in $G_1$ is adjacent to every vertex in $G_2$, and hence an independent subset of $G_1 \vee G_2$ must lie entirely within $G_1$ or entirely within $G_2$. 
\end{proof}

Taking the cone of the star graphs results in another sensitive family with sensitivity $n$.

\begin{example}
  \label{1-cone-example-sens}
  For $n \geq 1$ the complete tripartite graphs $K_{1,1,n}=K_1 \vee (K_1 \vee \compl{K}_n)$\,, also called \textit{thagomizer graphs}, have
  \[\Delta(K_{1,1,n})=n+1, \alpha(K_{1,1,n})=n\,, \hbox{ and } \sigma(K_{1,1,n})=\sigma(K_1 \vee \compl{K}_n)=n\] by~\cref{star} and~\cref{independence-num-join}.  Thus $\{K_{1,1,n}\}_{n=1}^\infty$ is sensitive.
\end{example}

Analyzing the join of two arbitrary graphs will require the following definition of $k$-sensitivity.

\begin{definition}
  \label{k-sens}
  For any nonempty graph $G$ and any integer $1 \leq k \leq |V_G| - \alpha(G)$\,, define the \textit{$k$-sensitivity of $G$} as
  \[\sigma_k(G) \coloneqq \min \{\Delta H \colon H \text{ is an $(\alpha(G) + k)$-vertex induced subgraph of $G$} \}.
  \]
  For an empty graph define $\sigma_k(\compl{K}_n)=n$ for all $k \geq 1$ and for any graph $G$ define $\sigma_k(G)=0$ if $k \leq 0$.
\end{definition}

Note that for all graphs $\sigma_1(G) = \sigma(G)$ and for all nonempty graphs we have $\sigma_k(G)=\Delta_{\beta}(G)$ for $\beta=\frac{\alpha(G)+k}{|V_G|}$ when written in the notation of~\cite{GarciaMarco-Knauer-22}.

\begin{example}
  \label{gen-sens-Kn}
  Since an induced subgraph of a complete graph is complete, $\sigma_k(K_n)=k$ for all $n \geq 2$ and $1 \leq k \leq n-1$.
\end{example}

\begin{example}
  The thagomizer graph $K_{1,1,n}$ has $n+2$ vertices and $\alpha(K_{1,1,n})=n$\,, with the only two values of $k$-sensitivity being $\sigma(K_{1,1,n})=n$ as in~\cref{1-cone-example-sens} and $\sigma_2(K_{1,1,n})=\Delta(K_{1,1,n})=n+1$.
\end{example}

\begin{example}
For Hamming graphs with $q \geq 3$ the sensitivity $\sigma(H_{n,q})=1$ (see~\cite{Dong-21,Tandya-22,Potechin-Tsang-24}).
In~\cite{Potechin-Tsang-24} Potechin and Tsang find additional values of $k>1$ such that $\sigma_k(H_{n,3})=1$.
\end{example}

We also note that the $k$-sensitivity of the disjoint union of copies of the same graph can be reduced to finding the same or smaller sensitivity of that graph.

\begin{proposition}
  \label{k-sens-nH}
  For $n$ disjoint copies of any nonempty graph $G$ and for $1 \leq k \leq n|V_G|-n\alpha(G)$ the $k$-sensitivity is $\sigma_k(nG)=\sigma_{\ell}(G)$ where $\ell=\left\lceil \frac{k}{n} \right\rceil$.
\end{proposition}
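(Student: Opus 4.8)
The plan is to exploit two facts about the disjoint union $nG = G^{(1)} \sqcup \cdots \sqcup G^{(n)}$ (each $G^{(i)} \cong G$): that $\alpha(nG) = n\alpha(G)$ and $|V_{nG}| = n|V_G|$, and that every induced subgraph $H$ of $nG$ splits as $H = H_1 \sqcup \cdots \sqcup H_n$ with $H_i = H \cap G^{(i)}$ an induced subgraph of $G$, so that $\Delta(H) = \max_{1 \le i \le n} \Delta(H_i)$. I will use throughout that $\sigma_j(G)$ is non-decreasing in $j$ on its admissible range, including the convention $\sigma_j(G) = 0$ for $j \le 0$: for $1 \le j < j' \le |V_G| - \alpha(G)$ any $(\alpha(G)+j')$-vertex induced subgraph contains an $(\alpha(G)+j)$-vertex one, and deleting vertices cannot raise the maximum degree, exactly as remarked after \cref{sens}. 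Finally, the hypothesis $1 \le k \le n(|V_G| - \alpha(G))$ gives $k/n \le |V_G| - \alpha(G)$, hence $1 \le \ell = \lceil k/n \rceil \le |V_G| - \alpha(G)$ (and $\ell \le k$), so that $\sigma_\ell(G)$ is defined.

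For the lower bound $\sigma_k(nG) \ge \sigma_\ell(G)$, take any induced subgraph $H$ of $nG$ on $n\alpha(G)+k$ vertices and set $k_i := |H_i| - \alpha(G)$, so $\sum_{i=1}^n k_i = k$ and each $k_i \le |V_G| - \alpha(G)$. When $k_i \ge 1$ we have $\Delta(H_i) \ge \sigma_{k_i}(G)$ by definition, and when $k_i \le 0$ we have $\Delta(H_i) \ge 0 = \sigma_{k_i}(G)$; hence $\Delta(H) = \max_i \Delta(H_i) \ge \max_i \sigma_{k_i}(G)$. Since the $k_i$ sum to $k$ over $n$ indices, pigeonhole gives some $k_{i^*} \ge \lceil k/n \rceil = \ell$, and then $\max_i \sigma_{k_i}(G) \ge \sigma_{k_{i^*}}(G) \ge \sigma_\ell(G)$ by monotonicity, using $1 \le \ell \le k_{i^*} \le |V_G| - \alpha(G)$. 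As $H$ was arbitrary, $\sigma_k(nG) \ge \sigma_\ell(G)$.

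For the upper bound I exhibit one good $H$. Choose integers $0 \le k_i \le \ell$ with $\sum_i k_i = k$; this is possible since $0 < k \le n\ell$ (e.g. set $\lfloor k/\ell\rfloor \le n$ of them equal to $\ell$, one to the remainder, the rest to $0$ — routine to check). Since each $k_i \le \ell \le |V_G| - \alpha(G)$, for $k_i \ge 1$ pick an induced subgraph $H_i$ of $G^{(i)}$ on $\alpha(G)+k_i$ vertices with $\Delta(H_i) = \sigma_{k_i}(G) \le \sigma_\ell(G)$ by monotonicity, and for $k_i = 0$ let $H_i$ be a maximum independent set of $G^{(i)}$, so $\Delta(H_i) = 0$. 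Then $H := \bigsqcup_i H_i$ has $\sum_i (\alpha(G)+k_i) = n\alpha(G)+k$ vertices and $\Delta(H) = \max_i \Delta(H_i) \le \sigma_\ell(G)$, whence $\sigma_k(nG) \le \sigma_\ell(G)$. Combining the two bounds yields $\sigma_k(nG) = \sigma_\ell(G)$.

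The argument is essentially bookkeeping, the substantive ingredients being the decomposition of $nG$, monotonicity of $k$-sensitivity, and a pigeonhole count; so I do not expect a genuine obstacle. The only points needing care are the ceiling arithmetic — confirming $1 \le \ell \le |V_G| - \alpha(G)$ so that $\sigma_\ell(G)$ is defined and the excesses $k_i$ can be chosen in $\{0,1,\dots,\ell\}$ with the correct sum — and keeping the convention $\sigma_j(G) = 0$ for $j \le 0$ in play when some pieces $H_i$ are smaller than a maximum independent set of $G$.
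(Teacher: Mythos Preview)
Your proof is correct and follows essentially the same approach as the paper's: both use $\alpha(nG)=n\alpha(G)$, the decomposition of an induced subgraph of $nG$ into its pieces in each copy, monotonicity of $k$-sensitivity, and a pigeonhole/distribution argument to identify $\ell=\lceil k/n\rceil$ as the relevant excess in some copy. The paper's proof is a brief sketch of this idea, whereas yours carries out the lower and upper bounds explicitly and checks the ceiling arithmetic carefully.
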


\begin{proof}
  We first note that $\alpha(nG)=n\alpha(G)$.  Since the $k$-sensitivity is nondecreasing in $k$, the smallest maximum degree of an induced subgraph in $nG$ on $n\alpha(G)+k$ vertices will be realized by putting $\alpha(G)$ independent vertices in each copy of $G$, distributing the remaining $k$ vertices as evenly as possible among the copies, and arranging to realize the optimum induced subgraph within at least one copy of $G$ that contains the most additional vertices.
  Thus $\sigma_k(nG)=\sigma_{\ell}(G)$ where $\ell=\left\lceil \frac{k}{n} \right\rceil$. 
\end{proof}

The final lemma will be used to find the sensitivity of joins of graphs in the following sections.

\begin{lemma}
  \label{simple-minofmax}
  Let $r,s$ be positive integers.  Then
  \[\min\limits_{1 \leq j \leq s} \max\{j,r-j\}=
    \begin{cases}
      r-s                                & \text{if } s \leq \left\lfloor \frac{r}{2} \right\rfloor, \\
      \left\lceil\frac{r}{2}\right\rceil & \text{if } s > \left\lfloor \frac{r}{2} \right\rfloor.
    \end{cases}\]
\end{lemma}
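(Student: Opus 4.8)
The plan is to analyze the integer-valued function $f(j) := \max\{j, r-j\}$ on the range $1 \le j \le s$, exploiting that it is unimodal: it is (weakly) decreasing while $j \le r/2$ and (weakly) increasing once $j \ge r/2$. First I would record the uniform lower bound $f(j) \ge \lceil r/2 \rceil$ valid for \emph{every} integer $j$; this follows from $\max\{j, r-j\} \ge \frac12\bigl(j + (r-j)\bigr) = \frac{r}{2}$ together with the integrality of $f(j)$. I would also note the two pointwise identities $f(j) = r - j$ whenever $j \le \lfloor r/2 \rfloor$ (since then $2j \le r$) and $f(j) = j$ whenever $j \ge \lceil r/2 \rceil$ (since then $2j \ge r$).

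The argument then splits according to how $s$ compares with $\lfloor r/2 \rfloor$. If $s \le \lfloor r/2 \rfloor$, every $j \in \{1,\dots,s\}$ satisfies $j \le \lfloor r/2 \rfloor$, so $f(j) = r - j$ on the whole range; this is strictly decreasing in $j$, hence the minimum is attained at $j = s$ and equals $r - s$, giving the first branch. If instead $s \ge \lfloor r/2 \rfloor + 1$, I would observe that $j_0 := \lceil r/2 \rceil$ lies in $\{1,\dots,s\}$: indeed $\lceil r/2 \rceil \ge 1$ because $r \ge 1$, and $\lceil r/2 \rceil \le \lfloor r/2 \rfloor + 1 \le s$. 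Since $f(j_0) = \max\{\lceil r/2 \rceil, \lfloor r/2 \rfloor\} = \lceil r/2 \rceil$ meets the lower bound from the first step, the minimum over the range equals $\lceil r/2 \rceil$, giving the second branch.

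There is no substantive obstacle; the only thing requiring care is the boundary bookkeeping with $\lfloor r/2 \rfloor$ versus $\lceil r/2 \rceil$ — in particular the degenerate case $r = 1$ (where $\lfloor r/2 \rfloor = 0$, so only the second branch occurs for positive $s$) and the even case (where $\lfloor r/2 \rfloor = \lceil r/2 \rceil = r/2$ and the two pointwise identities agree at $j = r/2$). Both are absorbed automatically into the argument above, provided the inequalities are stated with the correct direction of $\le$ versus $<$.
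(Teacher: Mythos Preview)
Your proof is correct and follows essentially the same approach as the paper: both split on whether $s \le \lfloor r/2 \rfloor$ and use that $\max\{j,r-j\}$ equals $r-j$ on the left of the midpoint and $j$ on the right. Your treatment of the second case via the uniform lower bound $f(j) \ge \lceil r/2 \rceil$ plus the witness $j_0 = \lceil r/2 \rceil$ is a slightly cleaner variant of the paper's explicit minimization over the two sub-ranges, but the substance is the same.
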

\begin{proof}
  If $s\leq \left\lfloor \frac{r}{2} \right\rfloor$ then for $1 \leq j \leq s$ we have $\max\{j,r-j\}=r-j$ and the minimum of these occurs when $j=s$.
  If $s > \left\lfloor \frac{r}{2} \right\rfloor$ then for $1 \leq j \leq \left\lfloor \frac{r}{2} \right\rfloor$ we again have $\max\{j,r-j\}=r-j$ with the minimum of these being $r-\left\lfloor \frac{r}{2} \right\rfloor = \left\lceil\frac{r}{2}\right\rceil$.  But for $\left\lfloor \frac{r}{2} \right\rfloor < j \leq s$ we have $\max\{j,r-j\}=j$ and the minimum of these occurs when $j=\left\lceil \frac{r}{2} \right\rceil$. 
\end{proof}

\section{Sensitivity and join}
\label{joins}
In this section we analyze sensitivity under the join operation.  The results yield methods for constructing several examples of sensitive and insensitive graph families and subfamilies, including complete multipartite graphs and certain other forms of generalized joins.

\subsection{Join of two graphs}
\label{sens-join}

We find the sensitivity of the join of two arbitrary graphs in terms of the sensitivity of the one with larger independence number and some $k$-sensitivities of the one with smaller independence number. As a result, we obtain explicit formulas or reduction  formulas for the sensitivity of complete bipartite graphs, the join of two or more copies of the same graph, and a join with the complete graph, revealing both sensitive and insensitive families.

\begin{theorem}\label{sensitivity-join-edges}
  Let $G_1$ and $G_2$ be graphs with independence numbers $\alpha_1 \geq \alpha_2$ and vertex sets $V_1$ and $V_2$. Define
  \begin{align*}
    b_j & \coloneqq \max \{j, \alpha_1+1-j\} \hbox{ and }                      \\
    c_j & \coloneqq \max \{j, \alpha_1 + 1 - j + \sigma_{j - \alpha_2}(G_2)\}.
  \end{align*}

  \begin{enumerate}
    \item \label{join-1} If $|V_2|<\alpha_1+1$ then
          \[\sigma(G_1 \vee G_2)
            =\min\left\{\sigma(G_1)\,,\displaystyle\min_{1 \leq j \leq \alpha_2} b_j\,,\displaystyle\min_{\alpha_2+1 \leq j \leq |V_2|} c_j\right\}.\]
    \item \label{join-2} If $|V_2| \geq \alpha_1+1$ then
          \[\sigma(G_1 \vee G_2)=\min\left\{\sigma(G_1)\,,\displaystyle\min_{1 \leq j \leq \alpha_2} b_j\,,\displaystyle\min_{\alpha_2 +1 \leq j \leq \alpha_1} c_j\,,\,\sigma_{\alpha_1+1-\alpha_2}(G_2)\right\}.\]
  \end{enumerate}
\end{theorem}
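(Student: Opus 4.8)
The plan is to compute $\sigma(G_1 \vee G_2)$ directly from \cref{sens}, exploiting the near-product structure of induced subgraphs of a join. By \cref{independence-num-join} we have $\alpha(G_1 \vee G_2) = \alpha_1$, so we must minimize $\Delta\big((G_1 \vee G_2)[S]\big)$ over all $S \subseteq V_1 \cup V_2$ with $|S| = \alpha_1 + 1$. Write $S_i = S \cap V_i$ and set $j \coloneqq |S_2|$, so that $|S_1| = \alpha_1 + 1 - j$ and $0 \le j \le \min\{|V_2|,\,\alpha_1 + 1\}$. Since every vertex of $V_1$ is adjacent to every vertex of $V_2$, each vertex of $S_1$ gains exactly $j$ neighbours and each vertex of $S_2$ gains exactly $|S_1|$ neighbours in the induced subgraph, so whenever both $S_1$ and $S_2$ are nonempty
\[
\Delta\big((G_1 \vee G_2)[S]\big) = \max\{\Delta(G_1[S_1]) + j,\ \Delta(G_2[S_2]) + (\alpha_1 + 1 - j)\},
\]
while if $S_2 = \emptyset$ (i.e.\ $j = 0$) this equals $\Delta(G_1[S_1])$, and if $S_1 = \emptyset$ (i.e.\ $j = \alpha_1 + 1$) it equals $\Delta(G_2[S_2])$.

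Next I fix $j$ and minimize over $S_1$ and $S_2$ separately. Because $S_1$ and $S_2$ range independently over (disjoint) vertex sets and the two entries of the maximum depend only on $S_1$ and only on $S_2$ respectively, the minimum of the maximum equals the maximum of the two individual minima; thus I only need the least value of $\Delta(G_1[S_1])$ over all $S_1$ of the prescribed size, and likewise for $G_2$. By \cref{k-sens} together with the conventions that $k$-sensitivity is $0$ for $k \le 0$: for $j \ge 1$ the least value of $\Delta(G_1[S_1])$ over $|S_1| = \alpha_1 + 1 - j \le \alpha_1$ is $0$, while for $j = 0$ it is $\sigma_1(G_1) = \sigma(G_1)$; and for $0 \le j \le |V_2|$ the least value of $\Delta(G_2[S_2])$ over $|S_2| = j$ is $\sigma_{j - \alpha_2}(G_2)$. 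Substituting, the minimum of $\Delta\big((G_1 \vee G_2)[S]\big)$ over $S$ with $|S_2| = j$ is: $\sigma(G_1)$ when $j = 0$; the quantity $\max\{j,\ \alpha_1 + 1 - j\} = b_j$ when $1 \le j \le \alpha_2$; the quantity $\max\{j,\ \alpha_1 + 1 - j + \sigma_{j - \alpha_2}(G_2)\} = c_j$ when $\alpha_2 + 1 \le j \le \min\{|V_2|,\,\alpha_1\}$; and $\sigma_{\alpha_1 + 1 - \alpha_2}(G_2)$ when $j = \alpha_1 + 1$ (which requires $|V_2| \ge \alpha_1 + 1$, and here $S_1 = \emptyset$).

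Finally I minimize over all admissible $j$. If $|V_2| \le \alpha_1$ then $j$ runs over $0 \le j \le |V_2|$, so the $j = \alpha_1 + 1$ term never occurs and the middle range is $\alpha_2 + 1 \le j \le |V_2|$; collecting the three contributions gives the first formula. If $|V_2| \ge \alpha_1$ then $j$ runs over $0 \le j \le \alpha_1 + 1$, the middle range is $\alpha_2 + 1 \le j \le \alpha_1$, and the endpoint $j = \alpha_1 + 1$ contributes $\sigma_{\alpha_1 + 1 - \alpha_2}(G_2)$, giving the second formula. I expect the one real difficulty to be the boundary bookkeeping: fixing the exact range of $j$ in each case, checking that the $\infty$ conventions for $\sigma(\compl{K}_n)$ and $\sigma_k(\compl{K}_n)$ (and the treatment of any out-of-range $k$-sensitivity) make every vacuous term drop harmlessly out of the minimum — in particular when $G_1$ or $G_2$ is empty — and verifying the two displayed formulas agree when $|V_2| = \alpha_1$. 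The graph-theoretic core, namely the degree splitting in a join and the independent optimization of the two sides, is routine once the notation is set up.
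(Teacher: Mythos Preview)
Your proposal is correct and follows essentially the same approach as the paper: both arguments fix $j = |S \cap V_2|$, compute the minimum possible $\Delta$ for that fixed $j$ by optimizing the two sides independently (yielding $\sigma(G_1)$, $b_j$, $c_j$, or $\sigma_{\alpha_1+1-\alpha_2}(G_2)$ according to the range of $j$), and then minimize over $j$. Your framing via the min--max/max--min swap for independent variables is slightly more systematic than the paper's direct construction of an optimal $H$ in each case, but the substance is identical, and your flagged boundary concerns (empty $G_i$, the $\infty$ conventions, and the overlap at $|V_2| = \alpha_1$) are exactly the ones the paper handles implicitly.
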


\begin{proof}
  Let $G_1$ and $G_2$ be graphs with vertex sets $V_1$ and $V_2$ and independence numbers $\alpha_1 \geq \alpha_2$. Hence $\alpha(G_1 \vee G_2) = \alpha_1$. We will determine the quantities
  \[
    a_j \coloneqq
    \min \{\Delta(H) \colon  \text{$H$ an induced subgraph with $|V_H| = \alpha_1 + 1$ and $|V_{H} \cap V_2| = j$}\}
  \]
  with $0 \leq j \leq \min\{|V_2|,\alpha_1 + 1\}$ whenever they are defined.  Then $\sigma(G_1 \vee G_2)$ will be the minimum of these $a_j$.

  First suppose that $j = 0$. If $G_1$ is nonempty then
  \[
    a_0 = \min \{\Delta(H) \colon |V_H| = \alpha_1+1, \ |V_{H} \cap V_2| = 0\}
  \]
  is by definition $\sigma(G_1)$.
  Observe that if $G_1$ is empty, then $a_0$ is undefined, but in this case by~\cref{k-sens} $a_1=|V_1|=\sigma(G_1)$, so we use $a_0=\sigma(G_1)$ in this case as well.

  Next, consider $1 \leq j \leq \alpha_2$. Since all $j$ vertices of $H$ in $G_2$ are joined to the remaining $\alpha_1+1-j$ vertices of $H$ in $G_1$, 
  to minimize $\Delta(H)$ we use $j$ independent vertices in $G_2$ and $\alpha_1+1-j$ independent vertices in $G_1$.  Thus for $1 \leq j \leq \alpha_2$ and such a subgraph $H$,
  \[
    a_j = \Delta(H) = \max \{j, \alpha_1+1-j\}.
  \]

  Finally, suppose that $\alpha_2+1 \leq j \leq \min \{|V_2|, \alpha_1+1\}$. Note that if $G_2$ is empty then $|V_2|=\alpha_2\leq \alpha_1$ and there is no such $j$. So assume that $G_2$ is not empty. In this case,  an induced subgraph $H$ of $G_1 \vee G_2$ on $\alpha_1+1$ vertices with $|V_H \cap V_2|=j$ must have at least one edge in $G_2$. Thus, in order to find $a_j$ we must consider the sensitivity $\sigma_{j - \alpha_2}(G_2)$.

  If $|V_2| < \alpha_1+1$ then there will be at least one vertex in $G_1$ and to minimize $\Delta(H)$ the vertices of $H$ in $G_2$ must be chosen so that $\Delta(H\cap G_2)=\sigma_{j-\alpha_2}(G_2)$, i.e., to have the subgraph induced by $H$ within $G_2$ be of smallest maximum degree.  Then for $\alpha_2+1 \leq j \leq |V_2|$ and such a subgraph $H$, 
    \[
    a_j = \Delta(H) = \max \{j, \alpha_1+1-j+\sigma_{j-\alpha_2}(G_2)\}.
  \]
  
  If $|V_2| \geq \alpha_1+1$, then for $\alpha_2 + 1 \leq j \leq \alpha_1$ the same argument shows that
  \[
    a_j = \max \{j, \alpha_1+1-j+\sigma_{j-\alpha_2}(G_2)\}.
  \]
But when $j=\alpha_1+1\leq |V_2|$, then every induced subgraph used in computing $a_j=a_{\alpha_1+1}$ has all vertices in $G_2$ and in this case, $a_{\alpha_1+1}=\sigma_{\alpha_1+1-\alpha_2}(G_2)$.
  
  Taking the minimum of all the quantities $a_j$ and introducing the notation $b_j$ and $c_j$ results in the given values for $\sigma(G_1 \vee G_2)$.
\end{proof}

It follows from the proof that in~\cref{sensitivity-join-edges}~(\ref{join-1}) and~(\ref{join-2}), when $G_1$ is empty $b_1=|G_1|=\sigma(G_1)$ by~\cref{k-sens} and $\sigma(G_1)$ may either be included or omitted, and when $G_2$ is empty the $c_j$ may be omitted.
This combined with~\cref{simple-minofmax} means that when $G_1$ and $G_2$ are both empty the expressions in~\cref{sensitivity-join-edges} yield an explicit formula for the sensitivity of the \textit{complete bipartite graphs} $K_{m,n}=\compl{K}_m \vee \compl{K}_n$ with $m \leq n$.  This can also be found directly and amounts to splitting $n+1$ vertices as evenly as possible between the two empty graphs, except that when $m \leq \left\lfloor \frac{n+1}{2} \right\rfloor$ it is optimal to include all $m$ vertices in $\compl{K}_m$.

\begin{corollary}\label{complete-bipartite-sens}
  The family of complete bipartite graphs $\compl{K}_m \vee \compl{K}_n$ with $m \leq n$ has sensitivity
  \[
    \sigma(\compl{K}_m \vee \compl{K}_n) = \begin{cases}
      n+1-m                                  & \text{ if $m \leq \left\lfloor \frac{n+1}{2}\right\rfloor$\,,} \\
      \left\lceil \frac{n+1}{2} \right\rceil & \text{ if $m > \left\lfloor \frac{n+1}{2}\right\rfloor$.}
    \end{cases}
  \]
Thus the family $\{\compl{K}_m \vee \compl{K}_n\}_{n=1}^\infty$ is sensitive and provided $\left\lfloor \frac{n+1}{2}\right\rfloor <m \leq n$ stays true the family $\{\compl{K}_m \vee \compl{K}_n\}_{m,n=1}^\infty$ is also sensitive.
In particular, the \textit{complete regular bipartite graph} $\compl{K}_n \vee \compl{K}_n$ has sensitivity
  \[
    \sigma(\compl{K}_n \vee \compl{K}_n) = \left\lceil \frac{n+1}{2} \right\rceil,
  \]
  and the family of complete regular bipartite graphs $\{\compl{K}_n \vee \compl{K}_n\}_{n=1}^\infty$ is sensitive.
\end{corollary}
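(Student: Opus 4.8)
The plan is to apply \cref{sensitivity-join-edges} directly with $G_1 = \compl{K}_n$ and $G_2 = \compl{K}_m$, so that $\alpha_1 = n \geq m = \alpha_2$ and $|V_2| = m \leq n = \alpha_1$, which places us in the first case of the theorem. Two features of this choice make the formula collapse at once. First, $\sigma(G_1) = \sigma(\compl{K}_n) = \infty$ by \cref{sens}, so it contributes nothing to the minimum. Second, since $\alpha_2 = m = |V_2|$, the index range $\alpha_2 + 1 \leq j \leq |V_2|$ is empty, so the terms $c_j$ do not occur — consistent with the observation in the proof of \cref{sensitivity-join-edges} that, $G_2$ being empty, those $a_j$ are disregarded. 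Hence
\[
  \sigma(\compl{K}_m \vee \compl{K}_n) = \min_{1 \leq j \leq m} b_j = \min_{1 \leq j \leq m} \max\{j, n+1-j\}.
\]

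Next I would invoke \cref{simple-minofmax} with $r = n+1$ and $s = m$ to evaluate this minimum, which yields $n+1-m$ when $m \leq \left\lfloor \frac{n+1}{2}\right\rfloor$ and $\left\lceil \frac{n+1}{2}\right\rceil$ when $m > \left\lfloor \frac{n+1}{2}\right\rfloor$, i.e.\ exactly the stated piecewise formula. For the regular case $m = n$: when $n \geq 2$ we have $n > \left\lfloor \frac{n+1}{2}\right\rfloor$, so we land in the second branch and $\sigma(\compl{K}_n \vee \compl{K}_n) = \left\lceil \frac{n+1}{2}\right\rceil$; when $n = 1$ the first branch applies but gives $n+1-m = 1 = \left\lceil \frac{n+1}{2}\right\rceil$, so the formula holds uniformly.

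For the (in)sensitivity assertions, \cref{independence-num-join} gives $\Delta(\compl{K}_m \vee \compl{K}_n) = \max\{m,n\} = n \to \infty$ as $n \to \infty$. With $m$ fixed, eventually $m \leq \left\lfloor \frac{n+1}{2}\right\rfloor$, so $\sigma = n+1-m \to \infty$, proving $\{\compl{K}_m \vee \compl{K}_n\}_{n=1}^\infty$ is sensitive. Under the hypothesis that $\left\lfloor \frac{n+1}{2}\right\rfloor < m \leq n$ persists (with $m \to \infty$, forcing $n \to \infty$), we are in the second branch and $\sigma = \left\lceil \frac{n+1}{2}\right\rceil \to \infty$, so the doubly-indexed family is sensitive; the regular family $\{\compl{K}_n \vee \compl{K}_n\}_{n=1}^\infty$ is the special case $m = n$, with $\Delta = n \to \infty$ and $\sigma = \left\lceil \frac{n+1}{2}\right\rceil \to \infty$.

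I do not anticipate a substantive obstacle here: the argument is a direct substitution into \cref{sensitivity-join-edges} followed by \cref{simple-minofmax}. The only points requiring care are bookkeeping — confirming that the conventions $\sigma(\compl{K}_n) = \infty$ and the empty-graph values of $k$-sensitivity indeed remove the irrelevant terms of \cref{sensitivity-join-edges}, and checking the small-$n$ boundary behavior of the floor and ceiling so that the regular-bipartite formula is stated uniformly.
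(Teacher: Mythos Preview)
Your proposal is correct and follows essentially the same approach as the paper: the paper states the corollary as a direct consequence of \cref{sensitivity-join-edges} (with both graphs empty) together with \cref{simple-minofmax}, and your write-up simply spells out the bookkeeping in more detail than the paper does.
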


The sensitivity of the \textit{double graph}, $G \vee G$, of $G$, or more generally the join $G \vee \cdots \vee G$ of at least two copies of $G$, is also easily analyzed using~\cref{sensitivity-join-edges}. 

\begin{corollary}
  \label{doublegraph}
  If $G$ is any graph then for the join of at least two copies of $G$ we have 
  \[\sigma(G \vee \cdots \vee G)=\min\left\{\sigma(G),\left\lceil \frac{\alpha(G)+1}{2} \right\rceil\right\}.\]
  Thus for an indexed family $\{G_n\}_{n=1}^{\infty}$ the family $\left\{G_n \vee \cdots \vee G_n\right\}_{n=1}^{\infty}$ is sensitive if $\{G_n\}_{n=1}^{\infty}$ is sensitive, and is insensitive if $\{G_n\}_{n=1}^{\infty}$ is insensitive.  
\end{corollary}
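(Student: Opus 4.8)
The plan is to peel off one copy of $G$, apply \cref{sensitivity-join-edges}, and induct on the number of copies. Write the join of $k \geq 2$ copies of $G$ as $G' \vee G$, where $G'$ is the join of the remaining $k-1$ copies. By iterating \cref{independence-num-join} we get $\alpha(G') = \alpha(G) =: \alpha$, so in the notation of \cref{sensitivity-join-edges} we are in the situation $\alpha_1 = \alpha_2 = \alpha$, with $G_1 = G'$ and $G_2 = G$; moreover $|V_G| \geq \alpha(G)$, so the hypothesis $|V_2| \geq \alpha_1$ of the second case holds. I would then observe that three of the four terms in that case simplify drastically: the index range $\alpha_2 + 1 \leq j \leq \alpha_1$ is empty, so $\min_{\alpha_2 + 1 \le j \le \alpha_1} c_j$ is a vacuous minimum and can be dropped; $\sigma_{\alpha_1 + 1 - \alpha_2}(G_2) = \sigma_1(G) = \sigma(G)$ since $\sigma_1 = \sigma$; and $\min_{1 \le j \le \alpha_2} b_j = \min_{1 \le j \le \alpha}\max\{j,\alpha+1-j\} = \left\lceil \frac{\alpha+1}{2}\right\rceil$ by \cref{simple-minofmax} with $r = \alpha+1$ and $s = \alpha$ (one checks that for $\alpha = 1$ one is in the case $s \le \lfloor r/2\rfloor$, giving $r-s = 1$, and for $\alpha \ge 2$ one is in the case $s > \lfloor r/2\rfloor$, giving $\lceil r/2\rceil$, and both equal $\lceil \frac{\alpha+1}{2}\rceil$). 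Hence $\sigma(G' \vee G) = \min\bigl\{\sigma(G'),\, \left\lceil \frac{\alpha+1}{2}\right\rceil,\, \sigma(G)\bigr\}$.

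For the base case $k = 2$ we have $G' = G$, and the displayed identity reads $\sigma(G \vee G) = \min\bigl\{\sigma(G), \left\lceil \frac{\alpha+1}{2}\right\rceil\bigr\}$ directly. For the inductive step, assume the formula for the $(k-1)$-fold join, i.e. $\sigma(G') = \min\bigl\{\sigma(G), \left\lceil \frac{\alpha+1}{2}\right\rceil\bigr\}$; substituting into $\min\bigl\{\sigma(G'), \left\lceil \frac{\alpha+1}{2}\right\rceil, \sigma(G)\bigr\}$ collapses to the same value, which completes the induction. The conventions $\sigma(\compl{K}_n) = \infty$ and $\sigma_k(\compl{K}_n) = \infty$ from \cref{sens,k-sens} are exactly what make this argument go through verbatim when $G$ is empty, in which case $\sigma(G) = \infty$ and the formula correctly returns $\left\lceil \frac{\alpha+1}{2}\right\rceil$ (consistent, e.g., with \cref{complete-bipartite-sens} when $k=2$).

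For the family statement I would use that every nonempty graph $H$ satisfies $\sigma(H) \leq \alpha(H)$ — take a maximum independent set together with one more vertex, of which there is at least one since $H$ has an edge. Now suppose $\{G_n\}_{n=1}^\infty$ is sensitive, so $\sigma(G_n)\to\infty$ and $\Delta(G_n)\to\infty$; the latter forces $G_n$ to be nonempty for large $n$, hence $\left\lceil \frac{\alpha(G_n)+1}{2}\right\rceil \geq \left\lceil \frac{\sigma(G_n)+1}{2}\right\rceil \to \infty$, so $\sigma(G_n \vee \cdots \vee G_n) = \min\bigl\{\sigma(G_n), \left\lceil \frac{\alpha(G_n)+1}{2}\right\rceil\bigr\} \to \infty$; and $\Delta(G_n \vee \cdots \vee G_n) \geq \Delta(G_n) \to \infty$ by \cref{independence-num-join}, so the joined family is sensitive. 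If instead $\{G_n\}$ is insensitive, then $\sigma(G_n)$ does not tend to infinity, so neither does $\min\bigl\{\sigma(G_n), \left\lceil \frac{\alpha(G_n)+1}{2}\right\rceil\bigr\} \leq \sigma(G_n)$, while $\Delta$ of the joined graphs still tends to infinity, so the joined family is insensitive. The only points requiring care are the bookkeeping in reducing \cref{sensitivity-join-edges} (the vacuous range, the $k$-sensitivity conventions, the two subcases of \cref{simple-minofmax}) and remembering to verify $\Delta\to\infty$ separately; there is no substantial obstacle.
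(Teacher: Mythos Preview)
Your proof is correct and follows essentially the same approach as the paper: apply \cref{sensitivity-join-edges}(2) with $\alpha_1=\alpha_2$, note the $c_j$ range is vacuous and the last term is $\sigma(G)$, use \cref{simple-minofmax} on the $b_j$ term, and observe that the formula is stable under further joins since $\alpha$ doesn't change. Your treatment is somewhat more explicit than the paper's (you spell out the induction, the two subcases of \cref{simple-minofmax}, and the family statement including the inequality $\sigma(H)\le\alpha(H)$), but there is no substantive difference in method.
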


\begin{proof}
For the double graph $G \vee G$ with $G$ nonempty this follows from~\cref{sensitivity-join-edges}~(\ref{join-2}) and \cref{simple-minofmax} because $\alpha_2=\alpha_1=\alpha(G)$ so the minimum in~\cref{sensitivity-join-edges}~(\ref{join-2}) only involves the two quantities $\sigma(G)$ and $\left\lceil \frac{\alpha(G)+1}{2} \right\rceil$.
When $G=\compl{K}_m$, then this follows  by~\cref{complete-bipartite-sens}.
In fact, since $\alpha(G \vee G)=\alpha(G)$, repeatedly joining $G$ with itself more than once will yield the same sensitivity as for the double graph of $G$. 
\end{proof}

\cref{sensitivity-join-edges} also yields that joining a complete graph to a nonempty graph preserves sensitivity and that the sensitivity of the \textit{complete split} graph $\compl{K}_m\vee K_n$ is $m$.

\begin{corollary}
  \label{Kn-join-G}
  If $G$ is a graph and $n \geq 1$, then $\sigma(G \vee K_n)=\sigma(G)$
  and the family $\{G \vee K_n\}_{n=1}^{\infty}$ is insensitive while $\{\compl{K}_m \vee K_n\}_{m=1}^{\infty}$ is sensitive.
\end{corollary}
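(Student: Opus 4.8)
The plan is to read off both formulas directly from \cref{sensitivity-join-edges} by letting the complete graph play the role of $G_2$ and exploiting the fact that $\sigma_k(K_n)=k$ for $1\le k\le n-1$ (\cref{gen-sens-Kn}) to collapse every quantity $b_j$ and $c_j$ appearing in that theorem to a single value.

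First I would treat the case $G$ nonempty. Set $G_1=G$ and $G_2=K_n$, so $\alpha_1=\alpha(G)\ge 1=\alpha_2=\alpha(K_n)$ and $\alpha(G\vee K_n)=\alpha(G)$. Substituting $\sigma_{j-1}(K_n)=j-1$ gives $b_1=\max\{1,\alpha_1\}=\alpha_1$ and, for $2\le j\le\alpha_1$,
\[
c_j=\max\bigl\{\,j,\ \alpha_1+1-j+(j-1)\,\bigr\}=\max\{j,\alpha_1\}=\alpha_1 .
\]
If $n\le\alpha_1$ I apply \cref{sensitivity-join-edges}~(\ref{join-1}); the additional terms $c_j$ for $\alpha_2+1\le j\le|V_2|=n$ lie in the range $j\le\alpha_1$ and hence also equal $\alpha_1$. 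If $n>\alpha_1$ I apply \cref{sensitivity-join-edges}~(\ref{join-2}), where furthermore $\sigma_{\alpha_1+1-\alpha_2}(K_n)=\sigma_{\alpha_1}(K_n)=\alpha_1$ since $\alpha_1\le n-1$. (The boundary value $n=\alpha_1$, where $\sigma_{\alpha_1}(K_n)$ is undefined, is absorbed into part~(\ref{join-1}).) In every case the formula reduces to $\sigma(G\vee K_n)=\min\{\sigma(G),\alpha(G)\}$, which equals $\sigma(G)$ because $\sigma(G)\le\alpha(G)$ for any nonempty $G$: adjoining a vertex $v$ to a maximum independent set $I$ yields an induced subgraph whose only edges join $v$ to $I$, so its maximum degree is at most $|I|=\alpha(G)$.

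The case $G=\compl{K}_m$ runs identically with $G_1=\compl{K}_m$, $G_2=K_n$, $\alpha_1=m\ge 1=\alpha_2$, the only change being that $\sigma(G_1)=\sigma(\compl{K}_m)=\infty$, so the minimum is realized by the common value $\alpha_1=m$, giving $\sigma(\compl{K}_m\vee K_n)=m$. The statements about families then follow from \cref{independence-num-join}: $\Delta(G\vee K_n)=\max\{\Delta(G)+n,\ (n-1)+|V_G|\}\to\infty$ as $n\to\infty$ while $\sigma(G\vee K_n)$ is the constant $\sigma(G)$ (respectively $m$), so $\{G\vee K_n\}_{n=1}^\infty$ is insensitive; and $\Delta(\compl{K}_m\vee K_n)=m+n-1\to\infty$ as $m\to\infty$ together with $\sigma(\compl{K}_m\vee K_n)=m\to\infty$ shows $\{\compl{K}_m\vee K_n\}_{m=1}^\infty$ is sensitive. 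The only mildly delicate points I anticipate are the bookkeeping at the boundary $n=\alpha_1$ between the two cases of \cref{sensitivity-join-edges} and recording the inequality $\sigma(G)\le\alpha(G)$; both are routine.
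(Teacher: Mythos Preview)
Your proposal is correct and follows essentially the same route as the paper: apply \cref{sensitivity-join-edges} with $K_n$ in the role of $G_2$, use $\sigma_k(K_n)=k$ to collapse all $b_j$ and $c_j$ to $\alpha(G)$, and then invoke $\sigma(G)\le\alpha(G)$. Your write-up is in fact more careful than the paper's---you separate the cases $n\le\alpha_1$ and $n>\alpha_1$ explicitly, note the boundary $n=\alpha_1$, and supply the short justification of $\sigma(G)\le\alpha(G)$ that the paper leaves implicit---whereas the paper simply asserts the reduction and treats $n=1$ as a standalone line.
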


\begin{proof}
  If $G$ is nonempty, then using that $\sigma_k(K_n)=k$ for all $n \geq 2$ and $1 \leq k \leq n-1$ by \cref{gen-sens-Kn} and $K_n$ as $G_2$ in~\cref{sensitivity-join-edges} yields that 
  \[
  \sigma(G\vee K_n)=\min\{\sigma(G),\alpha(G)\}=\sigma(G). 
  \]
  If $n=1$ then~\cref{sensitivity-join-edges}~(\ref{join-1}) again reduces to $\sigma(G\vee K_1)=\sigma(G)$.
  
  If $G=\compl{K}_m$, then both~\cref{sensitivity-join-edges}~(\ref{join-1}) and~(\ref{join-2}) yield 
  \[
    \sigma(G \vee K_n) = m = \sigma(G).
  \]
  
  By~\cref{independence-num-join}, we know that as $n \to \infty$,
  \[
  \Delta(G\vee K_n)=\max\{n-1+|V_G|,n+\Delta(G)\} \to \infty. 
  \] 
  We conclude that $\{\compl{K}_m \vee K_n\}_{m=1}^{\infty}$ is sensitive and $\{G \vee K_n\}_{n=1}^{\infty}$ is insensitive.
\end{proof}

\cref{Kn-join-G} can be used to show that taking the cone of the path and cycle families or of any disjoint union of complete graphs produces insensitive families.
\begin{example}
  \label{1-cone-examples-insens}
  It follows from~\cref{Kn-join-G} and~\cref{Kn-Pn-Cn}, but is also simple to verify directly, that for $m \geq 4$ the \textit{fan graphs} $K_1 \vee P_m$ and for $m\geq 5$ the \textit{wheel graphs} $K_1 \vee C_m$ have
  \[\sigma(K_1 \vee P_m)=\sigma(K_1 \vee C_m)=1.\]
  Since $\Delta(K_1 \vee P_m)=\Delta(K_1 \vee C_m)=m$,
  both $\{K_1 \vee P_m\}_{m=1}^{\infty}$ and $\{K_1 \vee C_m\}_{m=1}^{\infty}$ are insensitive.
\end{example}

\begin{example}
  \label{windmill}
  Consider the \textit{windmill graphs} $W_{m,n}=K_1 \vee mK_n$, where $mK_n$ is $m$ disjoint copies of $K_n$.
By~\cref{Kn-join-G} we know that $\sigma(K_1 \vee G)=\sigma(G)$ and taking the $1$-cone of a graph preserves sensitivity.  
Hence, provided $n>1$, all of the subfamilies of the windmill graphs are insensitive with $\Delta(K_1 \vee mK_n)=mn$ and $\sigma(K_1 \vee mK_n)=\sigma(mK_n)=1$ while $\sigma(K_1 \vee mK_1)=\sigma(K_1 \vee \overline{K}_m)=m$ as in~\cref{star}.
The specific windmill graphs $K_1 \vee mK_2$ are often referred to as \textit{Dutch windmill} or \textit{friendship} graphs.
\end{example}

\subsection{Joining an empty graph to a nonempty graph}
\label{sens-join-Kn-G}

We refer to the join of $G$ and the empty graph on $n$ vertices as the \textit{$n$-cone of $G$}. In this section, we use~\cref{k-sens} to restate \cref{sensitivity-join-edges} in the case where only one of the graphs is empty. This yields more explicit expressions for the sensitivity of the $n$-cone of $G$, allowing exploration of the sensitivity for an array of graph families, including generalized joins like complete multipartite graphs and certain generalized windmill graphs.

\begin{theorem}
  \label{n-cone gensensitivity}
  Let $G$ be any graph and consider the $n$-cone $\compl{K}_n \vee G$ for $n \geq 1$.
  Let \[c_j \coloneqq \max \{j, n + 1 - j + \sigma_{j - \alpha(G)}(G)\}.\]

  \begin{enumerate}
    \item
          \label{item1}
          If $n \geq |V_G|$ then
          \[\sigma(\compl{K}_n \vee G)=\min\limits_{1 \leq j \leq |V_G|} c_j.\]

    \item
          \label{item2}
          If $\alpha(G) < n <|V_G|$ then
          \[\sigma(\compl{K}_n \vee G)=\min \left\{ \min\limits_{1 \leq j \leq n} c_j, \sigma_{n+1 - \alpha(G)}(G) \right\}.\]

    \item
          \label{item3}
          If $n \leq \alpha(G)$ then 
          \begin{align*}
          \sigma(\compl{K}_n \vee G) &= \min \left\{ \min\limits_{1 \leq j \leq n} \max \{j, \alpha(G)+1-j\},  \,\sigma(G)\right\} \\
          &=
    \begin{cases}
      \min \left\{\sigma(G), \left\lceil \frac{\alpha(G)+1}{2} \right\rceil\right\} \  & \text{if $n > \left\lfloor \frac{\alpha(G)+1}{2} \right\rfloor$\,,} \\       \min \left\{\sigma(G), \alpha(G)+1-n  \right\} \ &\text{if $n \leq \left\lfloor \frac{\alpha(G)+1}{2} \right\rfloor$.}
    \end{cases}
    \end{align*}
  \end{enumerate}

\end{theorem}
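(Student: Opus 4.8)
The plan is to obtain all three parts by specializing \cref{sensitivity-join-edges} to the situation in which one of the two graphs is $\compl{K}_n$, and then, only in part~\eqref{item3}, to invoke \cref{simple-minofmax} to convert the resulting $\min\max$ into the stated case split. The first (purely bookkeeping) step is to decide which of $\compl{K}_n$ and $G$ is the graph of larger independence number in \cref{sensitivity-join-edges}. Since $\alpha(\compl{K}_n)=n$: in part~\eqref{item3} (where $n\le\alpha(G)$) I take $G_1=G$, $G_2=\compl{K}_n$, with $\alpha_1=\alpha(G)$, $\alpha_2=n$; in parts~\eqref{item1} and~\eqref{item2} (where $n>\alpha(G)$, so in particular $G$ is nonempty) I take $G_1=\compl{K}_n$, $G_2=G$, with $\alpha_1=n$, $\alpha_2=\alpha(G)$. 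At the boundary $n=\alpha(G)$ either choice is legitimate and yields the same value, and when $G$ itself is empty everything still goes through using $\sigma(\compl{K}_m)=\infty$, recovering \cref{complete-bipartite-sens}.

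For part~\eqref{item3} I would argue as follows. Here $|V_{G_2}|=n\le\alpha(G)=\alpha_1$, so \cref{sensitivity-join-edges}~(\ref{join-1}) applies. Its third term ranges over $\alpha_2+1\le j\le|V_2|$, i.e.\ over the empty set $n+1\le j\le n$, so it vanishes; the $b_j$ there become $\max\{j,\alpha(G)+1-j\}$; and we are left with $\sigma(\compl{K}_n\vee G)=\min\bigl\{\sigma(G),\ \min_{1\le j\le n}\max\{j,\alpha(G)+1-j\}\bigr\}$, which is the first displayed line of~\eqref{item3}. Applying \cref{simple-minofmax} with $r=\alpha(G)+1$ and $s=n$ evaluates the inner minimum as $\alpha(G)+1-n$ when $n\le\bigl\lfloor\frac{\alpha(G)+1}{2}\bigr\rfloor$ and as $\bigl\lceil\frac{\alpha(G)+1}{2}\bigr\rceil$ otherwise; distributing the outer minimum with $\sigma(G)$ over these two cases gives the second displayed line.

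For parts~\eqref{item1} and~\eqref{item2} the point to highlight is that, for $1\le j\le\alpha(G)=\alpha_2$, \cref{k-sens} gives $\sigma_{j-\alpha(G)}(G)=0$, so the quantity $c_j$ in \cref{sensitivity-join-edges} equals $b_j=\max\{j,n+1-j\}$ on that initial range. Hence the ``$\min b_j$'' and ``$\min c_j$'' terms of \cref{sensitivity-join-edges} merge into a single minimum of $c_j$ over $1\le j\le(\text{top of the }c\text{-range})$. Together with $\sigma(G_1)=\sigma(\compl{K}_n)=\infty$, which simply drops out of the outer minimum: \cref{sensitivity-join-edges}~(\ref{join-1}), valid since $|V_2|=|V_G|\le n=\alpha_1$, yields $\sigma(\compl{K}_n\vee G)=\min_{1\le j\le|V_G|}c_j$, which is part~\eqref{item1}; and \cref{sensitivity-join-edges}~(\ref{join-2}), valid since $|V_2|=|V_G|> n=\alpha_1$, yields $\min\bigl\{\min_{1\le j\le n}c_j,\ \sigma_{n+1-\alpha(G)}(G)\bigr\}$, which is part~\eqref{item2}.

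I do not anticipate a genuine obstacle: the argument is a careful restatement of \cref{sensitivity-join-edges}. The only places that need attention are (a) correctly matching the roles of $G_1,G_2$ and the index ranges, in particular noticing that the third sum in \cref{sensitivity-join-edges}~(\ref{join-1}) has empty index range in part~\eqref{item3}; (b) the observation $c_j=b_j$ for $j\le\alpha(G)$, which is exactly what allows the two separate minima to collapse into one in parts~\eqref{item1} and~\eqref{item2}; and (c) sanity-checking consistency at the overlaps $n=\alpha(G)$ and $n=|V_G|$ and in the degenerate case where $G$ is empty.
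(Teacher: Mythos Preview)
Your proposal is correct and follows essentially the same route as the paper's proof: specialize \cref{sensitivity-join-edges} by choosing which of $\compl{K}_n$ and $G$ plays the role of $G_1$ according to whether $n>\alpha(G)$ or $n\le\alpha(G)$, use the observation that $c_j=b_j$ for $j\le\alpha_2$ (so the two minima merge) together with $\sigma(\compl{K}_n)=\infty$ in parts~\eqref{item1} and~\eqref{item2}, and in part~\eqref{item3} note the empty index range on the $c_j$-term and then apply \cref{simple-minofmax}. Your write-up is in fact a bit more explicit than the paper's about the empty index range and the boundary/degenerate cases, but the argument is the same.
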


\begin{proof}
  This follows directly from \cref{sensitivity-join-edges} by comparing $n=\alpha(\compl{K}_n)$ with $\alpha(G)$ to determine whether $G$ or $\compl{K}_n$ serves as $G_1$ in that statement.

  In cases~(\ref{item1}) and~(\ref{item2}), $\alpha(\compl{K}_n) > \alpha(G)$, so use $\compl{K}_n$  as $G_1$ and $G$ as $G_2$. The statements in these cases follow from the corresponding parts of \cref{sensitivity-join-edges}, noting that when $j \leq \alpha_2$ since $\sigma_{j-\alpha_2}(G)=0$ then $b_j=c_j$, and that $c_1=\max\{1,n+\sigma_{1-\alpha(G)}(G)\}=\max\{1,n\}=n$, so $\sigma(\compl{K}_n)=n$ may be omitted when taking the minimum.

  In case~(\ref{item3}) since $\alpha(\compl{K}_n)=|\compl{K}_n| \leq \alpha(G)$, instead $\compl{K}_n$ serves as $G_2$, $G$ serves as $G_1$, and only the first part of~\cref{sensitivity-join-edges} applies, which yields the first expression for $\sigma(\compl{K}_n \vee G)$.
  We can also apply~\cref{simple-minofmax} to more precisely describe the sensitivity as in the second expression.  In particular, if $n=1$ then we recover $\sigma(K_1 \vee G)=\sigma(G)$.
\end{proof}

As a first consequence of~\cref{n-cone gensensitivity}, using only that all $k$-sensitivities are positive we can produce sensitive families by joining any fixed graph to $\compl{K}_n$.

\begin{corollary}
  \label{sens-n-cones}
  If $G$ is any graph then the family $\{\compl{K}_n \vee G\}_{n=1}^\infty$ of $n$-cones is sensitive.
\end{corollary}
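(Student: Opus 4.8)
The plan is to verify the two conditions in~\cref{sens} for the family $G_n = \compl{K}_n \vee G$: that $\Delta(G_n) \to \infty$ and that $\sigma(G_n) \to \infty$ as $n \to \infty$. The first is immediate from~\cref{independence-num-join}: regardless of $G$, we have $\Delta(\compl{K}_n \vee G) = \max\{|V_G|,\,\Delta(G)+n\}$, which tends to infinity. So the whole content is the second condition.

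For that I would invoke~\cref{n-cone gensensitivity}~(\ref{item1}). As soon as $n \geq \max\{2,|V_G|\}$ — in particular for all large $n$ — that case applies and gives
\[
\sigma(\compl{K}_n \vee G) = \min_{1 \leq j \leq |V_G|} c_j, \qquad c_j = \max\{j,\; n+1-j+\sigma_{j-\alpha(G)}(G)\}.
\]
Now I use only that every $k$-sensitivity is nonnegative: by~\cref{k-sens}, $\sigma_k(G)=0$ when $k \leq 0$ and $\sigma_k(G) \geq 0$ (possibly $\infty$) when $k \geq 1$, and likewise for empty $G$. Hence $c_j \geq \max\{j,\,n+1-j\} \geq \tfrac{n+1}{2}$ for every $j$ in the range, so
\[
\sigma(\compl{K}_n \vee G) \;\geq\; \min_{1 \leq j \leq |V_G|} \max\{j,\,n+1-j\} \;\geq\; \frac{n+1}{2} \;\longrightarrow\; \infty.
\]
(If one prefers the exact value of the middle quantity, apply~\cref{simple-minofmax} with $r=n+1$, $s=|V_G|$, giving $n+1-|V_G|$ once $n \geq 2|V_G|-1$; either way the bound grows without bound.) Combining with the degree estimate shows $\{\compl{K}_n \vee G\}_{n=1}^\infty$ is sensitive.

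I do not anticipate a genuine obstacle. The only points needing care are confirming that for all sufficiently large $n$ we are in case~(\ref{item1}) of~\cref{n-cone gensensitivity}, and observing that the argument is completely uniform — it never uses any positive lower bound on a $k$-sensitivity of $G$, only $\sigma_k(G) \geq 0$, so it applies equally whether $G$ is empty or nonempty, and independently of how the behaviour of $\sigma(G)$ or the $\sigma_k(G)$ interacts with $n$. The growth of $\sigma(\compl{K}_n \vee G)$ is forced purely by the $\compl{K}_n$ side contributing $n+1-j$ to $c_j$.
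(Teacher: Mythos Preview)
Your proof is correct and follows essentially the same approach as the paper: invoke \cref{n-cone gensensitivity}~(\ref{item1}) for large $n$, drop the nonnegative $\sigma_{j-\alpha(G)}(G)$ term to bound $c_j \geq \max\{j,\,n+1-j\}$, and conclude this grows without bound. You are slightly more explicit in checking $\Delta(\compl{K}_n\vee G)\to\infty$ and in noting that only $\sigma_k(G)\geq 0$ is needed, and your direct bound $c_j\geq (n+1)/2$ is a minor shortcut over the paper's appeal to \cref{simple-minofmax}, but the substance is the same.
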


\begin{proof}
  Using $0<\sigma_k(G)$ for all $1 \leq k \leq |V_G| - \alpha(G)$ in~\cref{n-cone gensensitivity}~(\ref{item1}), we know that whenever $n \geq |V_G|$ then
  \begin{align*}
    \sigma(\compl{K}_n \vee G) & =\min \left\{\max \{j,\, n+1-j+\sigma_{j-\alpha(G)}(G)\} : 1 \leq j \leq |V_G|\right\} \\
                        & \geq \min \left\{\max \{j,\, n+1-j\} : 1 \leq j \leq |V_G|\right\}.
  \end{align*}
  By~\cref{simple-minofmax}, once $n$ is sufficiently larger than $|V_G|$, we can conclude that
  $\sigma(\compl{K}_n \vee G)\geq n+1-|V_G|$ and hence $\{\compl{K}_n \vee G\}_{n=1}^\infty$ is sensitive.
\end{proof}

\begin{example}
  If $G=mK_n$ then $\sigma(G)=1$ and for $\ell \leq \alpha(G)=m$ we have $\sigma(\compl{K}_{\ell} \vee mK_n)=1$.  If instead $\ell > m$, then for $n \geq 2$ we know $\sigma_k(mK_n)=\left\lceil \frac{k}{m} \right\rceil$ by~\cref{k-sens-nH}, and in this case~\cref{n-cone gensensitivity} could be used to compute $\sigma(\compl{K}_{\ell} \vee mK_n)$.
\end{example}

\begin{example}
  \label{Km-join-PnCn}
  We consider both $\compl{K}_m \vee G_n$ for $G_n=P_n$, referred to as \textit{agave graphs} in~\cite{Estrada-16}, and $G_n=C_n$, referred to as \textit{generalized wheel graphs} in~\cite{Buckley-Harary-88}.
  We take $m \geq 3$ and $n \geq 4$ or $n \geq 5$ respectively in order to avoid anomalies in the maximum degree and sensitivity for small graphs. With these assumptions on $m$ and $n$ we have
  \[\Delta(\compl{K}_m \vee P_n)
    =\Delta(\compl{K}_m \vee C_n)=\max\left\{m+2,n \right\}.\]
  Also,
  $\alpha(\compl{K}_m \vee G_n)=\max\left\{m,\alpha(G_n)\right\}$ where $\alpha(P_n)=\left\lceil \frac{n}{2} \right\rceil$ and $\alpha(C_n)=\left\lfloor \frac{n}{2} \right\rfloor$
  and
  $\sigma(P_n)=\sigma(C_n)=1$.

  When $m \leq \alpha(G_n)$, i.e., $m \leq \left\lceil \frac{n}{2} \right\rceil$ for $G_n=P_n$ or $m \leq \left\lfloor \frac{n}{2} \right\rfloor$ for $G_n=C_n$, which ensures that $\alpha(\compl{K}_m \vee G_n)=\alpha(G_n)$,
  then by~\cref{n-cone gensensitivity}~(\ref{item3}), it follows that $\sigma(\compl{K}_m \vee G_n)=\sigma(G_n)=1$.  Hence the families $\{\compl{K}_m \vee G_n\}_{n=1}^{\infty}$ for $G_n=P_n$ or $G_n=C_n$ are insensitive.

  If $\alpha(G_n)<m<n$ so that~\cref{n-cone gensensitivity}~(\ref{item2}) applies, we use that $\Delta(P_n)=\Delta(C_n)=2$ to note that both $\sigma_k(P_n)\leq 2$ and $\sigma_k(C_n) \leq 2$ for all $k \geq 1$ (in particular for $k=m+1-\alpha(G_n)$) and conclude that $\sigma(\compl{K}_m \vee P_n)\leq 2$ and $\sigma(\compl{K}_m \vee C_n) \leq 2$. Hence if $m$ and $n$ both increase while maintaining $\left\lceil \frac{n}{2} \right\rceil<m<n$ or $\left\lfloor \frac{n}{2} \right\rfloor<m<n$ respectively, then the resulting families of graphs $\compl{K}_m \vee P_n$ and $\compl{K}_m \vee C_n$ are insensitive.

  When $m \geq n$ then~\cref{n-cone gensensitivity}~(\ref{item1}) and~\cref{sens-n-cones} imply that the families $\{\compl{K}_m \vee P_n\}_{m=1}^{\infty}$ and $\{\compl{K}_m \vee C_n\}_{m=1}^{\infty}$ are sensitive and for $m$ sufficiently large, the sensitivity will be at least $m+1-n$.

In summary, in~\cref{Kn-join-G} and~\cref{Km-join-PnCn} we have shown that the family
$\{\compl{K}_m\vee G_n\}_{n=1}^\infty$ is insensitive for $G_n=K_n$, $G_n=P_n$, and $G_n=C_n$, while $\{\compl{K}_m\vee G_n\}_{m=1}^\infty$ is sensitive for $G_n=K_n$, $G_n=P_n$, and $G_n=C_n$.
\end{example}

We now apply~\cref{n-cone gensensitivity}~(\ref{item3}) and~\cref{complete-bipartite-sens} to obtain the sensitivity of an arbitrary complete multipartite graph.

\begin{proposition}\label{complete-multipartite-sens}
  If $n_1 \geq \cdots \geq n_k$,
  then
  \[
    \sigma(\compl{K}_{n_1} \vee \cdots \vee \compl{K}_{n_k}) = \sigma(\compl{K}_{n_1} \vee \compl{K}_{n_2})= \begin{cases}
      n_1+1-n_2                                & \text{ if $n_2 \leq \left\lfloor \frac{n_1+1}{2}\right\rfloor$\,,} \\
      \left\lceil \frac{n_1+1}{2} \right\rceil & \text{ if $n_2 > \left\lfloor \frac{n_1+1}{2}\right\rfloor$.}
    \end{cases}
  \]
\end{proposition}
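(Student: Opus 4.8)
The second equality in the statement is \cref{complete-bipartite-sens} with $m=n_2$ and $n=n_1$, so the real content is the first equality: discarding all but the two largest parts does not change the sensitivity. The plan is to prove this by induction on $k$, removing the smallest part at each step, with the case $k=2$ vacuous. For $k\geq 3$ I would write
\[
K_{n_1}'\vee\cdots\vee K_{n_k}'=\compl{K}_{n_k}\vee G,\qquad G:=K_{n_1}'\vee\cdots\vee K_{n_{k-1}}',
\]
so that $\alpha(G)=n_1\geq n_k$ and, for $n_k\geq 2$, \cref{n-cone gensensitivity}~(\ref{item3}) applies and gives
\[
\sigma\bigl(K_{n_1}'\vee\cdots\vee K_{n_k}'\bigr)=\min\Bigl\{\,\min_{1\leq j\leq n_k}\max\{j,\,n_1+1-j\},\ \sigma(G)\,\Bigr\}.
\]
When $n_k=1$ one instead has $\compl{K}_{n_k}\vee G=K_1\vee G$ with $G$ nonempty, and \cref{Kn-join-G} gives $\sigma(K_1\vee G)=\sigma(G)$ directly. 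In either case, by the inductive hypothesis $\sigma(G)=\sigma(K_{n_1}'\vee K_{n_2}')$, since removing the smallest part leaves the two largest unchanged.

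It then remains to show the first entry of the minimum is at least $\sigma(K_{n_1}'\vee K_{n_2}')$, so that the minimum collapses to $\sigma(G)$. For this I would invoke \cref{simple-minofmax} with $r=n_1+1$ and $s=n_k$ to rewrite $\min_{1\leq j\leq n_k}\max\{j,n_1+1-j\}$ uniformly as $\max\{\,n_1+1-n_k,\ \lceil(n_1+1)/2\rceil\,\}$, and observe that \cref{complete-bipartite-sens} likewise gives $\sigma(K_{n_1}'\vee K_{n_2}')=\max\{\,n_1+1-n_2,\ \lceil(n_1+1)/2\rceil\,\}$ (its two listed cases merely record which of the two terms is larger). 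Since $n_k\leq n_2$ forces $n_1+1-n_k\geq n_1+1-n_2$, the desired inequality holds termwise, which closes the induction.

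The computations here are just routine floor/ceiling bookkeeping; the one point that genuinely needs care is the applicability of \cref{n-cone gensensitivity}~(\ref{item3}), which is why the degenerate value $n_k=1$ and the base case $k=2$ are handled separately via \cref{Kn-join-G} and \cref{complete-bipartite-sens}. As a self-contained alternative and sanity check, one can argue directly: a vertex set $S$ with $|S|=n_1+1$ meets the $k$ parts in sizes $s_1,\dots,s_k$ summing to $n_1+1$, and every vertex of $S$ lying in part $i$ has degree $n_1+1-s_i$ in the induced subgraph, so its maximum degree is $n_1+1-\min\{s_i:s_i>0\}$; maximizing $\min\{s_i:s_i>0\}$ over all feasible choices (achieved using only the two largest parts, since at least two of the $s_i$ must be positive) yields $\min\{n_2,\lfloor(n_1+1)/2\rfloor\}$, hence the stated value of $\sigma$ immediately.
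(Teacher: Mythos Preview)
Your main argument is correct and follows essentially the same route as the paper: induct on the number of parts, strip off the smallest part $\compl{K}_{n_k}$, apply \cref{n-cone gensensitivity}~(\ref{item3}) to the join $\compl{K}_{n_k}\vee G$, and check that the auxiliary min-of-max term cannot undercut $\sigma(G)$. The paper does that last check by a three-way case split on where $n_k$ and $n_2$ sit relative to $(n_1+1)/2$; your observation that both \cref{simple-minofmax} and \cref{complete-bipartite-sens} can be rewritten uniformly as $\max\{\,n_1+1-s,\ \lceil (n_1+1)/2\rceil\,\}$ for the appropriate $s$ is a tidy way to collapse those cases into a single monotonicity comparison, and your separate handling of $n_k=1$ via \cref{Kn-join-G} is actually slightly more careful than the paper (which tacitly invokes \cref{n-cone gensensitivity} even though that theorem is stated only for $n\geq 2$).

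Your closing direct argument is a genuinely different, self-contained proof: computing $\Delta(G[S])=n_1+1-\min\{s_i:s_i>0\}$ and then optimizing the smallest nonzero part size bypasses \cref{sensitivity-join-edges} and \cref{n-cone gensensitivity} entirely. It is shorter and more transparent for this particular family, at the cost of not illustrating the general join machinery the paper develops; either would be acceptable on its own.
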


\begin{proof}
  Consider the graph $\compl{K}_\ell \vee \compl{K}_m \vee \compl{K}_n$, where $\ell \leq m \leq n$ and view this graph as the join of $\compl{K}_\ell$ and $G=\compl{K}_m \vee \compl{K}_n$. Observe that $\alpha(\compl{K}_\ell \vee \compl{K}_m \vee \compl{K}_n) = n$ and consider how $\ell$ and $m$ compare to $\frac{n+1}{2}$.

  \textbf{Case 1:} $\ell \leq m \leq \frac{n+1}{2}$. By \cref{complete-bipartite-sens}, $\sigma(\compl{K}_m \vee \compl{K}_n) = n+1-m$. Since $\ell \leq m$,~\cref{n-cone gensensitivity}~(\ref{item3}) implies that $\sigma(\compl{K}_\ell \vee \compl{K}_m \vee \compl{K}_n) = \min \{n+1-m,n+1-\ell\} = n+1-m$.

  \textbf{Case 2:} $\ell \leq \frac{n+1}{2} < m$. By \cref{complete-bipartite-sens}, $\sigma(\compl{K}_m \vee \compl{K}_n) = \left\lceil \frac{n+1}{2} \right\rceil$. By~\cref{n-cone gensensitivity}~(\ref{item3}), $\sigma(\compl{K}_\ell \vee \compl{K}_m \vee \compl{K}_n) = \min \{\left\lceil \frac{n+1}{2} \right\rceil, n+1-\ell\} = \left\lceil \frac{n+1}{2} \right\rceil$.

  \textbf{Case 3:} $\frac{n+1}{2} < \ell \leq m$. By \cref{complete-bipartite-sens}, $\sigma(\compl{K}_m \vee \compl{K}_n) = \left\lceil \frac{n+1}{2} \right\rceil$, so by~\cref{n-cone gensensitivity}~(\ref{item3}), $\sigma(\compl{K}_\ell \vee \compl{K}_m \vee \compl{K}_n) = \left\lceil \frac{n+1}{2} \right\rceil$.

  We have shown that $\sigma(\compl{K}_\ell \vee \compl{K}_m \vee \compl{K}_n) = \sigma(\compl{K}_m \vee \compl{K}_n)$. It follows by induction, and then reordering for convenience, that if $n_1 \geq \cdots \geq n_k$, then
  \[
    \sigma(\compl{K}_{n_1} \vee \cdots \vee \compl{K}_{n_k}) = \sigma(\compl{K}_{n_1} \vee \compl{K}_{n_2}).
  \]
  Then~\cref{complete-bipartite-sens} can be applied to complete the result.
\end{proof}

A complete multipartite graph is regular if and only if each part has the same cardinality since if there were two parts with a different number of vertices, then vertices in those parts would have different degrees. We denote by $K_n^m$ the complete regular multipartite graph with $m$ parts of order $n$ and find its sensitivity by applying~\cref{complete-multipartite-sens} to find $\sigma(K_n^m)=\sigma(K_n^2)=\sigma(\compl{K}_n \vee \compl{K}_n)=\left\lceil \frac{n+1}{2} \right\rceil$.  Since also $\Delta(K_n^m)=\Delta(\compl{K}_n \vee \cdots \vee \compl{K}_n)=(m-1)n$, this provides another doubly indexed family with both sensitive and insensitive subfamilies.

\begin{corollary}
  \label{complete-reg-multipartite}
  The sensitivity of the complete regular multipartite graph with $m$ parts of order $n$ is
  $\sigma(K_n^m)=\sigma(\compl{K}_n \vee \cdots \vee \compl{K}_n) = \left\lceil \frac{n+1}{2} \right\rceil$.
  The family $\{K_n^m\}_{n=1}^\infty$ is sensitive and the family $\{K_n^m\}_{m=1}^\infty$ is insensitive with a constant sensitivity that can be made arbitrarily large by fixing a sufficiently large number of vertices in each part.
\end{corollary}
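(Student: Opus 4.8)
The plan is to read this off directly from \cref{complete-multipartite-sens}. Setting $n_1 = \cdots = n_m = n$ in that proposition gives $\sigma(K_n^m) = \sigma(\compl{K}_n \vee \compl{K}_n)$, and since the second-largest part size is $n_2 = n$, which satisfies $n > \left\lfloor \frac{n+1}{2}\right\rfloor$ for every $n \geq 2$, the second branch of the formula yields $\sigma(K_n^m) = \left\lceil \frac{n+1}{2}\right\rceil$; equivalently one may quote \cref{complete-bipartite-sens} for the regular complete bipartite graph $\compl{K}_n \vee \compl{K}_n$ directly. The boundary value $n = 1$ is immediate, since $K_1^m = K_m$ has $\sigma(K_m) = 1 = \left\lceil \frac{1+1}{2}\right\rceil$ by \cref{Kn-Pn-Cn}.

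For the family statements, first record the maximum degree. A vertex of $K_n^m$ with $m \geq 2$ is adjacent to all vertices outside its own part, hence has degree $(m-1)n$; this also follows by iterating the $\Delta$ formula of \cref{independence-num-join}, so $\Delta(K_n^m) = (m-1)n$. Now fix $m \geq 2$: as $n \to \infty$ we have $\Delta(K_n^m) = (m-1)n \to \infty$ and $\sigma(K_n^m) = \left\lceil \frac{n+1}{2}\right\rceil \to \infty$, so $\{K_n^m\}_{n=1}^\infty$ is sensitive. Fixing $n$ instead: as $m \to \infty$ we have $\Delta(K_n^m) = (m-1)n \to \infty$ while $\sigma(K_n^m) = \left\lceil \frac{n+1}{2}\right\rceil$ is independent of $m$, so $\{K_n^m\}_{m=1}^\infty$ is insensitive; and since that constant equals $\left\lceil \frac{n+1}{2}\right\rceil$, it can be made as large as desired by choosing $n$ large.

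There is essentially no obstacle here, as the content is entirely in \cref{complete-multipartite-sens} and \cref{complete-bipartite-sens}. The only points requiring a moment's care are the degenerate case $n = 1$, where $n = \left\lfloor\frac{n+1}{2}\right\rfloor$ rather than strictly larger, so the bipartite formula's first branch applies but returns the same value $1$, and the implicit restriction $m \geq 2$ needed so that $\Delta(K_n^m) \to \infty$ and the family is genuinely sensitive or insensitive rather than a family of empty graphs.
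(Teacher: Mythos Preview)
Your proposal is correct and follows essentially the same approach as the paper: the paper derives the corollary directly from \cref{complete-multipartite-sens} (equivalently \cref{complete-bipartite-sens} for $\compl{K}_n \vee \compl{K}_n$) and notes $\Delta(K_n^m) = (m-1)n$ to draw the sensitive/insensitive conclusions. Your additional remarks on the degenerate cases $n=1$ and $m\geq 2$ are a minor refinement but do not constitute a different method.
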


The complete multipartite graphs in~\cref{complete-multipartite-sens}, some generalizations of windmill graphs in~\cref{windmill}, and other graph families can naturally be described in a unified way via the generalized join construction, in which several graphs are joined together according to the structure of a graph $G$.

\begin{definition}
\label{genjoin}
  Given a graph $G$ with vertices $v_i$ and a corresponding graph $H_i$ for $i=1,\dots,n$, the \textit{generalized join of $H_1, H_2, \dots H_n$ over $G$}, denoted $G[H_1,\dots,H_n]$, is the graph obtained from the disjoint union of the graphs $H_1,\dots,H_n$ by adding edges joining each vertex in $H_i$ to each vertex in $H_j$ when $v_i$ is adjacent to $v_j$ in $G$.
\end{definition}

The generalized join is also referred to as the \textit{joined union}~\cite{Chudnovsky-24} or \textit{Zykov sum}~\cite{Liao-Aziz-Alaoui-Hou-22}.  Many common constructions can be realized as other special cases of generalized joins. For example, $K_2[G_1,G_2]=G_1 \vee G_2$ is the standard join, $G[H,\dots,H]=G \circ H$ is the \textit{lexicographic product} or \textit{graph composition} of $G$ and $H$, and the complete multipartite graphs in~\cref{complete-multipartite-sens} are generalized joins of empty graphs over a complete graph, i.e., of the form $K_m[\compl{K}_{n_1},\dots,\compl{K}_{n_m}]$.

The windmill graphs $W_{m,n}=K_1 \vee mK_n$ in~\cref{windmill}, which can be viewed as the generalized join $S_{m+1}[K_1,K_n,\dots,K_n]$, were used in~\cite{Estrada-16} to explore divergence between often-used transitivity indices and proposed as appropriate models for citation and collaboration networks that exhibit similar divergence properties. 
In order to extend and expand these results, in~\cite{Kooij-19} Kooij considered both what he called type I generalized windmill graphs $K_m\vee nH=S_{n+1}[K_m,H,\dots,H]$ and type II generalized windmill graphs $\compl{K}_m \vee nH=S_{n+1}[\compl{K}_m,H,\dots,H]$, as well as a type~III graph that is actually a corona - see~\cref{corona-def}. The $W_{m,n}$ and Types~I and~II in~\cite{Kooij-19} are all subsumed as particular instances of $G[H_0,H_1,\dots,H_n]$ with $G=S_{n+1}$.  The generalized join or joined union $G[H_1,\dots,H_n]$ for any graph $G$ of order $n$ is described as a fourth family of generalized windmill graphs in~\cite{Liao-Aziz-Alaoui-Hou-22}, where it is noted they more closely relate to real-life transportation networks, and in~\cite{Chudnovsky-24} it is pointed out that the graphs $H_1,\dots,H_n$ can be considered to describe the layers of a multilayer network with $G$ controlling how the layers are joined.  We consider a few cases of generalized joins for which we can determine sensitivity.

\begin{example}
  \label[example]{genwindmill}
  The generalized join of a set of graphs $H_1, H_2,\dots, H_m$ over an empty graph $\compl{K}_m$ is the disjoint union of the $H_i$.  The sensitivity is thus
  \[\sigma(\compl{K}_m[H_1,\dots,H_m])=\begin{cases}
  \min \{\sigma(H_i) \mid H_i \hbox{ is nonempty}\} & \hbox{if at least one $H_i$ is nonempty,} \\
  \sum\limits_{i=1}^m \sigma(H_i) & \hbox{if all $H_i$ are empty.}
  \end{cases}
  \]
because adding an isolated vertex to a nonempty graph does not change the sensitivity.  
  When all of the $H_i$ are nonempty this generalized join corresponds to the partition of a graph into connected components, which is referred to as the \textit{parallel decomposition} (see~\cref{Concluding remarks}). 
\end{example}

\begin{example}
  \label{series}
  For the generalized join over a complete graph, $G=K_m[H_1,\dots,H_m]$, if $G=K_m$ and all $H_i$ are equal to $H$, then $G[H_1,\dots,H_m]=K_m \circ H=H \vee \cdots \vee H$ is both a lexicographic product and a successive join.  As seen in~\cref{doublegraph}, which came from successively applying~\cref{sensitivity-join-edges},
  \[\sigma(K_m \circ H)=\sigma(H \vee H \vee \cdots \vee H)=\min\left\{\sigma(H),\left\lceil \frac{\alpha(H)+1}{2} \right\rceil\right\}.\]
  For arbitrary $H_1,\dots,H_n$~\cref{sensitivity-join-edges} still applies successively, but even if the $H_i$ were ordered to be monotonic in $\alpha_i$, computing $\sigma(K_m[H_1,\dots,H_m])$ could require $k$-sensitivities of some $H_i$ and switching between using~\cref{sensitivity-join-edges}~(\ref{join-1}) and~(\ref{join-2}) if there is not a consistent comparison between the larger of the independence numbers and the order of the graph with smaller independence number.
Being able to fully compute $\sigma(K_m[H_1,\dots,H_m])$ would address the \textit{series decomposition}, which describes the partition of $\compl{G}$ into connected components $\compl{H}_1,\dots,\compl{H}_m$ (see~\cref{Concluding remarks}).
\end{example}

\begin{example}
\label{TypeII}
For the generalized windmill graphs of type I in~\cite{Kooij-19}, by~\cref{Kn-join-G} and~\cref{genwindmill} we have $S_{n+1}[K_m,H_1,\dots,H_n]=K_m \vee (H_1+\dots+H_n)$ and hence
  \[\sigma(S_{n+1}[K_m,H_1,\dots,H_n])
    =\begin{cases}
  \min \{\sigma(H_i) \mid H_i \hbox{ is nonempty}\} & \hbox{if at least one $H_i$ is nonempty,} \\
  \sum\limits_{i=1}^m \sigma(H_i) & \hbox{if all $H_i$ are empty.}
  \end{cases}\]

  For certain Type II generalized windmill graphs $S_{n+1}[\compl{K}_m,H,\dots,H]=\compl{K}_m\vee nH$ we can use~\cref{n-cone gensensitivity} to more explicitly compute sensitivity when it is possible to compute $\sigma_k(nH)$ using~\cref{k-sens-nH}.
  Assume $H$ is not empty because if $H=\compl{K}_{\ell}$ for some $\ell \geq 1$ then $\sigma(\compl{K}_m \vee nH)=\sigma(\compl{K}_m \vee \compl{K}_{n\ell})$ is known by~\cref{complete-bipartite-sens}.
  Note that in general
  \begin{align*}
    \Delta(\compl{K}_m\vee nH) & =\max\{n|V_H|,m+\Delta(H)\} \hbox{ and } \\
    \alpha(\compl{K}_m\vee nH) & =\max\{m,n\alpha(H)\}.
  \end{align*}
  Here we consider only the case where $m \leq n\alpha(H)$ so that $\alpha(\compl{K}_m \vee nH)=n\alpha(H)$ and hence by~\cref{n-cone gensensitivity}~(\ref{item3})
  we have
  \[\sigma(\compl{K}_m\vee nH)=
    \begin{cases}
      \min \{\sigma(H), n\alpha(H)+1-m  \} \                                 & \text{if $m \leq \left\lfloor \frac{n\alpha(H)+1}{2} \right\rfloor$\,,} \\
      \min \{\sigma(H), \left\lceil \frac{n\alpha(H)+1}{2} \right\rceil\} \  & \text{if $m > \left\lfloor \frac{n\alpha(H)+1}{2} \right\rfloor$.}
    \end{cases}
  \]
  Thus the family $\{\compl{K}_m \vee nH\}_{n=1}^\infty$ is insensitive.  In contrast, the family $\{\compl{K}_m \vee nH\}_{m=1}^\infty$ is sensitive by~\cref{sens-n-cones}.

  In general for $S_{n+1}[\compl{K}_m,H_1,\dots,H_n]=\compl{K}_m \vee (H_1+ \cdots +H_n)$,
  when
  \[m \leq \alpha(H_1+ \cdots +H_n)=\sum_{i=1}^n \alpha(H_i)\]
 \cref{n-cone gensensitivity}~(\ref{item3}) provides $\sigma(S_{n+1}[\compl{K}_m,H_1,\dots,H_n])$ in terms of $\sigma(H_1+ \cdots +H_n)=\min\limits_{1 \leq i \leq n} \{\sigma(H_i)\}$.
  For example, if any of the $H_i$ have sensitivity $1$ then $\sigma(S_{n+1}[\compl{K}_m,H_1,\dots,H_n])=1$ and the family where $m \to \infty$ is insensitive.
\end{example}

\begin{corollary}
  \label{n-cone-sens-families}
  Let $m\geq 1$ and $\{G_n\}_{n=1}^\infty$ be a sensitive family. Then the family $\{\compl{K}_m \vee G_n\}_{n=1}^\infty$ is also sensitive.
\end{corollary}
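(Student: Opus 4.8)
The plan is to verify directly the two requirements in the definition of a sensitive family for $\{\compl{K}_m \vee G_n\}_{n=1}^\infty$: that $\Delta(\compl{K}_m \vee G_n) \to \infty$ and that $\sigma(\compl{K}_m \vee G_n) \to \infty$. The degree condition is immediate from \cref{independence-num-join}, since $\Delta(\compl{K}_m \vee G_n) = \max\{|V_{G_n}|, \Delta(G_n)+m\} \ge \Delta(G_n) + m$, which tends to infinity because $\{G_n\}$ is sensitive. So the work lies entirely in the sensitivity statement.

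Before invoking \cref{n-cone gensensitivity} I would first observe that $\alpha(G_n) \to \infty$. This is not part of the hypothesis, but follows from the elementary bound $\sigma(G) \le \alpha(G)$, valid for every nonempty graph $G$: given a maximum independent set $I$ and any vertex $v \notin I$, in $G[I \cup \{v\}]$ each vertex of $I$ has degree at most $1$ while $v$ has degree at most $\alpha(G)$, so $\sigma(G) \le \Delta(G[I\cup\{v\}]) \le \alpha(G)$. Since $\{G_n\}$ is sensitive, $\Delta(G_n) \to \infty$, so $G_n$ is nonempty for all large $n$ and hence $\sigma(G_n) \le \alpha(G_n)$; as $\sigma(G_n) \to \infty$ this forces $\alpha(G_n) \to \infty$.

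Next I would split on the fixed value of $m$. For $m = 1$, the cone $\compl{K}_1 \vee G_n = K_1 \vee G_n$ satisfies $\sigma(K_1 \vee G_n) = \sigma(G_n)$ by \cref{Kn-join-G} as soon as $G_n$ is nonempty, and this tends to infinity. For $m \ge 2$, since $\alpha(G_n) \to \infty$ we have $m \le \alpha(G_n)$ for all large $n$, so \cref{n-cone gensensitivity}~(\ref{item3}) applies, and in either of its two subcases it yields
\[
\sigma(\compl{K}_m \vee G_n) \;\ge\; \min\!\left\{\sigma(G_n),\ \left\lceil\tfrac{\alpha(G_n)+1}{2}\right\rceil\right\},
\]
where in the subcase $m \le \lfloor\frac{\alpha(G_n)+1}{2}\rfloor$ one uses $\alpha(G_n)+1-m \ge \alpha(G_n)+1-\lfloor\frac{\alpha(G_n)+1}{2}\rfloor = \lceil\frac{\alpha(G_n)+1}{2}\rceil$. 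Since both $\sigma(G_n)$ and $\lceil\frac{\alpha(G_n)+1}{2}\rceil$ tend to infinity, so does the right-hand side, which completes the argument.

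I do not expect a genuine obstacle: the proof is short. The one point requiring care is that the hypothesis only supplies $\sigma(G_n) \to \infty$, whereas \cref{n-cone gensensitivity}~(\ref{item3}) requires $m \le \alpha(G_n)$, so one must first deduce $\alpha(G_n) \to \infty$ via the bound $\sigma \le \alpha$. The separate treatment of $m = 1$ is only bookkeeping, forced because \cref{n-cone gensensitivity} is stated for $n \ge 2$.
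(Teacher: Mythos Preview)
Your proof is correct and follows essentially the same route as the paper's: both arguments use the fact that $\alpha(G_n)\to\infty$ to place $\compl{K}_m \vee G_n$ in the regime of \cref{n-cone gensensitivity}~(\ref{item3}) for large $n$, and then observe that each of the three possible values there tends to infinity. Your version is slightly more careful---you explicitly justify $\alpha(G_n)\to\infty$ via the bound $\sigma(G)\le\alpha(G)$, verify the maximum-degree condition, and treat $m=1$ separately since \cref{n-cone gensensitivity} is stated for $n\ge 2$---whereas the paper leaves these points implicit.
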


\begin{proof}
  Since $\sigma(G_n) \to \infty$ as $n \to \infty$, we observe that $\alpha(G_n) \to \infty$ as $n \to \infty$. So there exists an $N_1 > 0$ such that for any $n \geq N_1$ we have $\alpha(G_n) \geq m$. This means that for each $n \geq N_1$, $\sigma(\compl{K}_m \vee G_n)$ equals either $\left\lceil \frac{\alpha(G_n)+1}{2} \right\rceil$, $\alpha(G_n) + 1 - m$, or $\sigma(G_n)$ by~\cref{n-cone gensensitivity}~(\ref{item3}).

  Now let $C > 0$. Each of the quantities $\left\lceil \frac{\alpha(G_n)+1}{2} \right\rceil$, $\alpha(G_n) + 1 - m$, and $\sigma(G_n)$ become arbitrarily large as $n$ increases. Thus, there exists an $N_2 \geq N_1$ such that for any $n \geq N_2$, each of these three quantities is greater than $C$ and hence $\sigma(\compl{K}_m \vee G_n) \geq C$, which completes the proof.
\end{proof}

Sensitive families can thus be constructed by either taking a sequence of $n$-cones of any graph, as in~\cref{sens-n-cones}, or by fixing $m$ and taking the $m$-cone of a family of sensitive graphs, as in~\cref{n-cone-sens-families}.  For instance, 
$\{\compl{K}_m \vee Q_{2n+1}^+\}_{n=1}^\infty$, $\{\compl{K}_m \vee K_{1,n}\}_{n=1}^\infty$, and $\{\compl{K}_m \vee K_{1,1,n}\}_{n=1}^\infty$
are all sensitive.
Insensitive families can be constructed by joining any graph with the sequence of complete graphs, as in~\cref{Kn-join-G}, or by building certain types of generalized joins - e.g., a sequence of Type II generalized windmill graphs with an increasing number of pendant graphs, or in which one of the pendant graphs has sensitivity $1$ as in~\cref{TypeII}. Other operations, like taking the $1$-cone of a graph or doubling any graph, preserve sensitivity or insensitivity.

\subsection{Joining an empty graph to a regular nonempty graph}
\label{reg-n-cones-section}

If $m$ is large relative to the number of vertices of a graph $G$, then $\sigma(\compl{K}_m \vee G)$ is given by~\cref{n-cone gensensitivity}. The following proposition shows that when $G$ is regular and $m$ is sufficiently large, there is a simple formula for $\sigma(\compl{K}_m \vee G)$.

\begin{theorem}
  \label{n-cone-lower-bound}
  Let $G$ be a regular graph with degree $d$ on $n$ vertices.  Then provided $m\geq 2n-d-1$ we have $\sigma(\compl{K}_m \vee G) = m-n+d+1$.
\end{theorem}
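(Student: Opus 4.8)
The plan is to deduce the formula from \cref{n-cone gensensitivity}~(\ref{item1}). First I would dispose of the case $d=0$: there $\compl{K}_m\vee G=\compl{K}_m\vee\compl{K}_n$ is complete bipartite with $m\ge 2n-1$, so $n\le\left\lfloor\frac{m+1}{2}\right\rfloor$ and \cref{complete-bipartite-sens} gives $\sigma=m+1-n=m-n+d+1$. So assume $d\ge 1$, whence $n\ge d+1\ge 2$ and $m\ge 2n-d-1\ge n=|V_G|$; thus \cref{n-cone gensensitivity}~(\ref{item1}) applies and the task reduces to showing
\[
\min_{1\le j\le n}c_j=m-n+d+1,\qquad c_j=\max\{j,\ m+1-j+\sigma_{j-\alpha(G)}(G)\},
\]
with the minimum attained at $j=n$.

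Two elementary estimates drive the argument. The first is that $\alpha(G)\le n-d$ for any $d$-regular $G$: if $S$ is a maximum independent set and $v\in S$, then the $d$ neighbours of $v$ all lie in $V_G\setminus S$, so $|V_G\setminus S|\ge d$. The second is a lower bound on $k$-sensitivities coming from regularity: any $j$-vertex induced subgraph $H$ of $G$ is obtained by deleting $n-j$ vertices, so every vertex of $H$ still has degree at least $d-(n-j)$; hence $\sigma_{j-\alpha(G)}(G)\ge d-(n-j)$ for $\alpha(G)<j\le n$, with $\sigma_{n-\alpha(G)}(G)=\Delta(G)=d$ when $j=n$.

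With these I would bound $c_j$ below by $m-n+d+1$ on the whole range $1\le j\le n$. For $1\le j\le\alpha(G)$ we have $\sigma_{j-\alpha(G)}(G)=0$, so $c_j=\max\{j,m+1-j\}$; since $j\le\alpha(G)\le n-d\le\frac{m+1}{2}$ (the last inequality being $m\ge 2n-2d-1$, which follows from the hypothesis), this equals $m+1-j\ge m+1-\alpha(G)\ge m-n+d+1$. For $\alpha(G)<j\le n$, the $k$-sensitivity bound gives
\[
c_j\ \ge\ m+1-j+\sigma_{j-\alpha(G)}(G)\ \ge\ m+1-j+\bigl(d-(n-j)\bigr)\ =\ m-n+d+1.
\]
Finally $c_n=\max\{n,\ m-n+d+1\}=m-n+d+1$ since $m\ge 2n-d-1$. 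Taking the minimum over $j$ gives the claimed value, attained at $j=n$.

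I do not expect a genuine obstacle here; the content is the interplay of the two estimates. The point requiring care is that $\alpha(G)\le n-d$ is exactly what forces the small-$j$ terms $c_j$ to be at least $m-n+d+1$, the degree-drop estimate handles the large-$j$ terms, and the hypothesis $m\ge 2n-d-1$ is precisely the threshold making $c_n$ equal to $m-n+d+1$ rather than to $n$. It is worth sanity-checking an extreme instance, e.g.\ $G=K_n$ (so $d=n-1$, $m\ge n$), where the formula yields $\sigma(\compl{K}_m\vee K_n)=m$, consistent with \cref{Kn-join-G}.
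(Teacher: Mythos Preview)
Your argument is correct, but it takes a different route from the paper's. The paper proves the result directly: it fixes an induced subgraph $H$ on $m+1$ vertices, splits into the case where $H$ contains all of $G$ (where a straightforward degree count gives $\Delta(H)=m-n+d+1$) and the case where $k\geq 1$ vertices of $G$ are missing, and in the latter case shows that any $v\in H\cap G$ has $\deg_H(v)=m-n+d+1+(k-(d-\deg_{H\cap G}(v)))\geq m-n+d+1$, since the $k$ deleted vertices must include all $d-\deg_{H\cap G}(v)$ lost neighbours of $v$. Your proof instead routes through \cref{n-cone gensensitivity}~(\ref{item1}) and bounds the $c_j$ using the two estimates $\alpha(G)\le n-d$ and $\sigma_{j-\alpha(G)}(G)\ge d-(n-j)$. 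The underlying combinatorial content is the same degree-drop observation, but your packaging shows explicitly how the theorem sits inside the paper's general $c_j$ framework and identifies the minimiser $j=n$, whereas the paper's argument is self-contained and avoids invoking the earlier machinery. A minor note: your separate treatment of $d=0$ is not actually necessary, since $m\ge 2n-1\ge n$ still holds and item~(\ref{item1}) applies with all $c_j=\max\{j,m+1-j\}$; but handling it via \cref{complete-bipartite-sens} does no harm.
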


\begin{proof} 
  Since $m \geq n \geq \alpha(G)$, we have that $\alpha(\compl{K}_m \vee G)=\max\{m,\alpha(G)\}=m$.
  Consider an arbitrary subgraph $H$ induced by a subset of $m+1$ vertices consisting of all $n$ vertices in $G$ and $m-n+1$ vertices in $\compl{K}_m$. Each vertex in $H \cap \compl{K}_m$ has degree $n$ and each vertex in $H \cap G$ has degree $m-n+d+1$.  Thus for all such $H$ we have $\Delta(H)=m-n+d+1$ since $m-n+d+1 \geq n$.

  We claim that $\sigma(\compl{K}_m \vee G) = m - n + d + 1$. Since the above paragraph shows that any subgraph on $m+1$ vertices that contains all $n$ vertices of $G$ has maximum degree $m - n + d + 1$, it suffices to show that any other subgraph has larger maximum degree. Let $H$ be an induced subgraph on $m+1$ vertices which does not include all $n$ vertices of $G$.

  Let $k$ be the number of vertices of $G$ not included in $H$ and let $v \in H$. If $v \in \compl{K}_m$, then $\deg_H (v) = n - k$.  We will show if $v \in G$, then $\deg_H (v)\geq m - n + d + 1$.
  Since we assumed $m\geq 2n-d-1$, then $m - n + d + 1\geq n>n-k$ and hence it will follow that $\Delta(H)\geq m+1-n+d$.

  If $v \in G$, then
  \[
    \deg_H (v) = m + 1 - n + k + \deg_{H \cap G} (v).
  \]
  Since the number of neighbors of $v$ in $G$ that are not included in $H$ is $d - \deg_{H \cap G}(v)$ we have $k - (d - \deg_{H \cap G}(v)) \geq 0$ and can rewrite
  \[
    \deg_H(v)= m - n + d + 1 + k - (d-\deg_{H \cap G}(v))
  \]
  to observe that
  \[
    \deg_H(v)\geq m - n + d + 1.
  \]
  This completes the proof.
\end{proof}

\begin{example}
  In the case when $m\geq n$~\cref{n-cone-lower-bound} provides another proof that the complete split graphs $\{\compl{K}_m \vee K_n\}_{m=1}^\infty$ form a sensitive family since $\sigma(\compl{K}_m \vee K_n)=m$ (as shown in~\cref{Kn-join-G} for any $m,n \geq 1$).
\end{example}

\begin{example}
  For generalized wheel graphs $\compl{K}_m \vee C_n$, with $m \geq 2n-3$ it follows immediately from~\cref{n-cone-lower-bound} that $\sigma(\compl{K}_m \vee C_n)=m-n+3$, providing more precise information than obtained from~\cref{sens-n-cones} in the portion of~\cref{Km-join-PnCn} about sensitive families.
\end{example}

\begin{example}
  When $G$ is a regular graph of degree $d$ with $n$ vertices and $m \geq 2n-d-1$, \cref{n-cone-lower-bound} yields $\sigma(\compl{K}_m \vee G)=m-n+d+1\geq n$ and hence $\{\compl{K}_m \vee G\}_{m=1}^{\infty}$ is sensitive, which is consistent with~\cref{sens-n-cones} but provides the exact sensitivity in exchange for the conditions on $G$ and $m$.
In particular, if $G=nH$ where $H$ is a regular graph of degree $d$ and order $\ell$, then $\sigma(\compl{K}_m \vee nH)=m-n\ell+d+1$ for the family of generalized windmill graphs of type II and $\{\compl{K}_m \vee nH\}_{m=1}^{\infty}$ is sensitive.
\end{example}

\section{Sensitivity and stable blocks}
\label{stable blocks}
In this section we consider when a graph $G$ has a decomposition into sets called \textit{stable blocks}.
These are defined by Larson in~\cite{Larson-12} and allow isolating maximum independent sets and describing sensitivity.

\begin{definition}
  \label{sb-def}
  Let $G = (V,E)$ be a graph and let $S$ be a nonempty proper subset of $V$. The \emph{border} of $S$, denoted $\mathrm{bord}(S)$, is the set of vertices in $S$ adjacent in $G$ to at least one vertex in $V \setminus S$; the \emph{interior} of $S$, denoted $\mathrm{int}(S)$, is $S \setminus \mathrm{bord}(S)$; and the set $S$ is called a \emph{stable block} if $\alpha(G[S]) = \alpha(G[\mathrm{int}(S)])$.
\end{definition}
Note that if $S$ is a stable block, then $\mathrm{int}(S)$ is not the empty set.
Stable blocks were used to efficiently reduce independence number calculations to subgraphs in~\cite{Larson-11}. We use them to bound sensitivity in~\cref{single stable block}, and in~\cref{stable-block-partition} to compute sensitivity precisely in terms of the blocks when there is a full partition of $V_G$ into such sets.  We also define vertex identification and use it as a means to construct a stable block in~\cref{gluing-root-sens}, repeat this to bound the sensitivity of a rooted product in~\cref{RootedProducts}, and indicate how to construct a graph with a partition into stable blocks in~\cref{StableBlockDecomp}.  As an application we compute the sensitivity of a corona of two graphs and of families of successive corona graphs.

\subsection{Single stable block}
\label{single stable block}

We first consider graphs in which there exists a single stable block.
Note that this need not occur; for example a complete graph has no stable blocks.
But since a stable block in a graph $G$ is a proper subgraph of $G$, when one does exist it may be easier to compute its sensitivity than $\sigma(G)$, which is then bounded above by the sensitivity of the interior of the stable block.

\begin{proposition}\label{stable-block-sens}
  Let $G$ be a graph, let $H$ be an induced subgraph such that $V_H$ is a stable block. Assume that $G[\mathrm{int}(H)]$ is nonempty. 
  Then $\sigma(G) \leq \sigma(G[\mathrm{int}(H)])$.
\end{proposition}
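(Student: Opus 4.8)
The plan is to exhibit a single vertex subset $T \subseteq V_G$ with $|T| = \alpha(G)+1$ and $\Delta(G[T]) = \sigma(G[\mathrm{int}(H)])$; since $\sigma(G)$ is by definition the minimum of $\Delta(G[T])$ over all such $T$, this yields the asserted inequality at once. Throughout, write $S = V_H$ and abbreviate $\mathrm{int} = \mathrm{int}(S)$, so that the stable-block hypothesis reads $\alpha(G[S]) = \alpha(G[\mathrm{int}])$, and recall that $\mathrm{int}$ is then nonempty.

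The first step is to push a maximum independent set of $G$ off the border of $S$. Starting from any maximum independent set $I$ of $G$, observe that $I \cap S$ is independent in $G[S]$, so $|I \cap S| \le \alpha(G[S]) = \alpha(G[\mathrm{int}])$. Let $J$ be a maximum independent set of $G[\mathrm{int}]$ and set $I^{*} = (I \setminus S) \cup J$. Because every vertex of $\mathrm{int}$ has all of its neighbors inside $S$, there are no edges of $G$ between $J$ and $I \setminus S$, so $I^{*}$ is independent in $G$; moreover $|I^{*}| = |I \setminus S| + |J| \ge |I \setminus S| + |I \cap S| = |I| = \alpha(G)$, so $I^{*}$ is again a maximum independent set, now with $I^{*} \cap S = J \subseteq \mathrm{int}$. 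Put $L = I^{*} \setminus S$; then $L$ is independent, disjoint from $S$, and $|L| = \alpha(G) - \alpha(G[\mathrm{int}])$.

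The second step grafts onto $L$ an optimal ``overflow'' set living in the interior. Since $G[\mathrm{int}(H)]$ is a nonempty graph, $\alpha(G[\mathrm{int}(H)]) < |\mathrm{int}(H)|$, so there is a set $T' \subseteq \mathrm{int}(H)$ with $|T'| = \alpha(G[\mathrm{int}(H)]) + 1$ and $\Delta(G[T']) = \sigma(G[\mathrm{int}(H)])$, using that it suffices to test sets of size $\alpha+1$ and that $G[\mathrm{int}(H)][T'] = G[T']$. Let $T = L \cup T'$, so that $|T| = \bigl(\alpha(G) - \alpha(G[\mathrm{int}(H)])\bigr) + \bigl(\alpha(G[\mathrm{int}(H)]) + 1\bigr) = \alpha(G) + 1$. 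Since $T' \subseteq \mathrm{int}(H) = \mathrm{int}(S)$, no vertex of $T'$ is adjacent to any vertex of $V_G \setminus S$, hence to no vertex of $L$; therefore $G[T]$ is the disjoint union of the edgeless graph $G[L]$ and $G[T']$, giving $\Delta(G[T]) = \Delta(G[T']) = \sigma(G[\mathrm{int}(H)])$, which finishes the argument.

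The main obstacle is the first step: checking that the stable-block equality $\alpha(G[S]) = \alpha(G[\mathrm{int}(S)])$ is exactly what licenses replacing the border-meeting part of a maximum independent set by an equally large independent set drawn from the interior, with no loss in total size. The ``interior vertices have no neighbors outside $S$'' property then does double duty: it is what makes the splice in the first step produce a genuine independent set, and it is also precisely what forces $G[T]$ to split as a disjoint union in the last step, so that the maximum degree of $G[T]$ is controlled entirely by the chosen overflow set $T'$.
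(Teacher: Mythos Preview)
Your proof is correct and follows essentially the same approach as the paper: both arguments first use the stable-block equality to replace a maximum independent set of $G$ by one that avoids $\mathrm{bord}(H)$, and then splice in an optimal $(\alpha(G[\mathrm{int}(H)])+1)$-vertex subset of the interior to produce an induced subgraph on $\alpha(G)+1$ vertices whose only edges lie in $G[\mathrm{int}(H)]$. Your write-up is a bit more explicit about why the splice preserves independence and why $G[T]$ decomposes as a disjoint union, but the underlying construction is the same.
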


\begin{proof}
  Let $G$ and $H$ be as in the statement and $A \subset V_G$ be a maximum independent set of vertices in $G$. We first note that $H$ contains $\alpha(G[\mathrm{int}(H)]) = \alpha(H)$ vertices of $A$. For if not, there is a larger independent subset of $G$ which agrees with $A$ outside of $V_H$, and forms a maximum independent subset of $G[\mathrm{int}(H)]$ inside of $H$.

  Next, let $A'$ be a maximum independent subset of $V_G$ with no vertices in the border of $H$; such a subset could be constructed by starting with a maximum independent subset of $G[\mathrm{int}(H)]$, then adding the vertices from $A - V_H$.

  By utilizing an $(\alpha(H)+1)$-induced subgraph in $G[\mathrm{int}(H)]$ with maximum degree $\sigma(G[\mathrm{int}(H)])$, construct $W$ as an $(\alpha(G) + 1)$-induced subgraph of $G$ such that
  \begin{enumerate}
    \item $V_W \cap (V_G - \mathrm{int}(H)) = A' \cap (V_G - \mathrm{int}(H))$, and
    \item $\Delta(W) = \sigma(G[\mathrm{int}(H)])$.
  \end{enumerate}
  These conditions ensure that the only edges of $W$ lie inside $G[\mathrm{int}(H)]$. This $W$ is an induced subgraph on $\alpha(G) + 1$ vertices, so $\sigma(G) \leq \Delta(W) = \sigma(G[\mathrm{int}(H)])$.
\end{proof}

\begin{remark}
The case of a star graph $G=K_1 \vee \overline{K}_n$ with $H$ a proper subgraph containing the center vertex shows that the conclusion of~\cref{stable-block-sens} need not hold when $G[\mathrm{int}(H)]$ is empty.
\end{remark}

The process of vertex identification in a disjoint union of two graphs gives one possible way to produce a stable block and can be generalized to rooted products and coronas - see~\cref{RootedProducts}.

\begin{definition}
  \label{identify point}
  Given two graphs $G_1$ and $G_2$ with distinguished vertices $v_1$ and $v_2$ respectively, the \textit{identification of $G_1$ and $G_2$ at $v_1$ and $v_2$}, denoted by $(G_1+G_2)/\{v_1,v_2\}$, is the disjoint union $G_1+G_2$ with the vertices $v_1$ from $G_1$ and $v_2$ from $G_2$ replaced by a single vertex that is adjacent to all neighbors of $v_1$ in $G_1$ and of $v_2$ in $G_2$.
\end{definition}

\begin{example}
  \label{dandelion}
  The family of \textit{dandelion graphs} is $D_{m,n}=(K_1 \vee \compl{K}_m + P_n)/\{v_c,v_e\}$, where the center vertex $v_c$ of a star graph $K_1 \vee \compl{K}_m$ is identified with an end vertex $v_e$ of path $P_n$. To avoid the star graphs $D_{m,1}=K_1 \vee \compl{K}_{m}$ and $D_{m,2}=K_1 \vee \compl{K}_{m+1}$ and the anomalous case of $\sigma(D_{m,4})=2$ for $m \geq 2$ assume $m \geq 1$ and $n>4$. Since the identified vertex cannot be included in a maximum independent set, $\alpha(D_{m,n})=m+\left\lfloor \frac{n}{2} \right\rfloor$ rather than $\alpha(K_1 \vee \compl{K}_m) + \alpha(P_n)$.
  Starting at the unidentified end of $P_n$ and alternately including two vertices and omitting a vertex yields that $\sigma(D_{m,n})=\sigma(P_n)=1$. Since $\Delta(D_{m,n})=m+1$, then $\{D_{m,n}\}_{m=1}^{\infty}$ is insensitive while $\{D_{m,n}\}_{n=1}^{\infty}$ is not.
\end{example}

\begin{corollary}
  \label{gluing-root-sens}
  Let $H$ and $H'$ be graphs rooted at $v$ and $v'$ respectively. Let $G=(H+H')/\{v,v'\}$. Assume that there exists a maximum independent set of vertices in $H$ which does not contain $v$ and that $G[\mathrm{int}(H)]$ is nonempty. Then $\sigma(G) \leq \sigma(G[\mathrm{int}(H)])$.
\end{corollary}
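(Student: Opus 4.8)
The plan is to realize the copy of $H$ sitting inside $G$ as a stable block and then quote~\cref{stable-block-sens}. Write $w$ for the vertex of $G=(H+H')/\{v,v'\}$ obtained by identifying $v$ and $v'$, and let $S\subseteq V_G$ be the vertex set of the copy of $H$ inside $G$, so that $G[S]\cong H$ with $w$ playing the role of the root $v$. Since $H'$ has a vertex other than $v'$, the set $S$ is a nonempty proper subset of $V_G$, so~\cref{sb-def} applies.

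First I would compute the border of $S$. The only adjacencies in $G$ that are not already present in $H+H'$ are those at $w$, which is adjacent to all neighbors of $v$ in $H$ and all neighbors of $v'$ in $H'$; every other vertex of $S$ keeps exactly its $H$-neighborhood, which lies in $S$. Hence no vertex of $S$ other than $w$ has a neighbor in $V_G\setminus S$, so $\mathrm{bord}(S)\subseteq\{w\}$ and, correspondingly, $\mathrm{int}(S)\supseteq V_H\setminus\{v\}$, with $G[\mathrm{int}(S)]$ the induced subgraph of $H$ on $V_H\setminus\{v\}$ (the notation $G[\mathrm{int}(H)]$ in the statement). In the generic case $\mathrm{bord}(S)=\{w\}$ and $G[\mathrm{int}(S)]=H-v$; if $v'$ is isolated in $H'$ then $\mathrm{bord}(S)=\varnothing$ and $S$ is trivially a stable block, so assume the former.

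Next I would check the stable-block condition $\alpha(G[S])=\alpha(G[\mathrm{int}(S)])$, i.e.\ $\alpha(H)=\alpha(H-v)$. The inequality $\alpha(H-v)\le\alpha(H)$ is clear, and the hypothesis that $H$ has a maximum independent set not containing $v$ gives $\alpha(H-v)\ge\alpha(H)$; hence equality, and $S$ is a stable block. Finally, with $G[\mathrm{int}(H)]=G[\mathrm{int}(S)]$ assumed nonempty, applying~\cref{stable-block-sens} to $G$ and the induced subgraph $G[S]$ yields $\sigma(G)\le\sigma(G[\mathrm{int}(S)])=\sigma(G[\mathrm{int}(H)])$. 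The only thing to be careful about is the bookkeeping between the roles of $H$ in the two statements and the degenerate case where $\mathrm{bord}(S)$ is empty; neither presents a genuine obstacle.
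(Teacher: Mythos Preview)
Your proof is correct and follows essentially the same approach as the paper: show that the vertex set of the copy of $H$ in $G$ is a stable block (border contained in the identified vertex, independence number unchanged by removing that vertex thanks to the hypothesis on $v$), then invoke \cref{stable-block-sens}. You are in fact a bit more careful than the paper in separating out the degenerate case $\mathrm{bord}(S)=\varnothing$ and in noting that $S$ must be a proper subset of $V_G$.
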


\begin{proof}
  Since $G[\mathrm{int}(H)]$ is nonempty, by \cref{stable-block-sens} it suffices to show that the set  of vertices of $G[H]$ is a stable block in $G$.
  As a result of the vertex identification that produces $G$, the subgraph $G[H]$ has the vertex $v$ as its border.  Since $v$ lies outside of some maximum independent subset of $H$, we know $\alpha(H) = \alpha(G[\mathrm{int}(H)])$ and the set of vertices of $G[H]$ is a stable block in $G$.
\end{proof}

\begin{example}
  We can produce examples by applying~\cref{gluing-root-sens} to any graph $H'$ and any $1$-cone $H$ whose interior has at least one edge, identified at the cone point of $H$ and any vertex of~$H'$.
  For instance, if $H$ is any of the $1$-cones $K_1 \vee G_n$ with $G_n=K_n$ for $n \geq 2$, $G_n=P_n$ for $n \geq 4$, or $G_n=C_n$ for $n \geq 5$, as in~\cref{Kn-Pn-Cn}, then the resulting graph family will have sensitivity $1$ by~\cref{1-cone-examples-insens} and~\cref{gluing-root-sens}.

  Using $H=K_{1,1,n}=K_1 \vee K_{1,n}$ or $K_{1,1,1,n}=K_1 \vee (K_1 \vee K_{1,n})$ with $n \geq 2$ and~\cref{star} or~\cref{1-cone-example-sens}, or using $H=K_1 \vee (\compl{K}_n \vee K_m)$ with $m,n \geq 1$ and~\cref{Kn-join-G}, produces graph families whose sensitivity is bounded above by $n$.
  In the last case since $\Delta(K_1 \vee (\compl{K}_m\vee K_n))=m+n$ a resulting family will be insensitive as $m \to \infty$.

  Similarly, applying~\cref{gluing-root-sens} at the $K_1$ vertex in $H=K_1 \vee K_n^m$ for $m,n \geq 1$ and for any choice of $H'$ and identified vertex yields a graph family with sensitivity at most $\left\lceil \frac{n+1}{2} \right\rceil$ by~\cref{complete-reg-multipartite}.  Since $\Delta(K^m_n)=(m-1)n$, letting $m \to \infty$ results in an insensitive family.
\end{example}

\begin{example}
  \label{cycle-star}
  For the star graph $S_n=K_1 \vee \compl{K}_n$ with $\{v\}$ the distinguished vertex in $K_1$, and the cycle graph $C_m$ with $m \geq 3$ and any distinguished vertex $\{u\}$, the \textit{cycle-star graph}~\cite{Clark-85} is $CS_{m,n}=(C_m+S_n)/\{u,v\}$.  It has $\alpha(CS_{m,n})=\left\lfloor \frac{m}{2} \right\rfloor+n$ and $\Delta(CS_{m,n})=n+2$.  Since there is a maximum independent subset in $C_m$ not containing $u$ and $CS_{m,n}[\mathrm{int}(C_m)]=C_m\setminus\{u\}$ is a nonempty path, we conclude that $\sigma(CS_{m,n})=\sigma(CS_{m,n}[\mathrm{int}(C_m)])=1$ for $m\geq 5$ (so that the path has length at least $4$) and the family $\{CS_{m,n}\}_{n=1}^{\infty}$ is insensitive.
\end{example}

\begin{example}
  The family of \textit{pineapple graphs} is $P_{m,n}=((K_1 \vee \compl{K}_m)+K_n)/\{u,v\}$ where $u$ is the distinguished $K_1$ vertex in $K_1 \vee \compl{K}_m$  and $v$ is any vertex of $K_n$.  We take $m \geq 1$ and $n \geq 3$ so that the graph is not a star. Using $H=K_n$ in~\cref{gluing-root-sens} yields that pineapple graphs also have sensitivity $1$.  Since the maximum degree is $m+n-1$ both $\{P_{m,n}\}_{m=1}^{\infty}$ and $\{P_{m,n}\}_{n=1}^{\infty}$ are insensitive.
\end{example}

The following definition used in~\cite{Larson-11} was originally given in~\cite{Zhang-90}.

\begin{definition}
  Let $I_c$ be an independent set of vertices in a graph $G$, and let $N(I_c)$ denote the neighborhood of $I_c$, consisting of all vertices adjacent to some vertex in $I_c$. If for each independent subset $J$ of $G$ we have $|I_c| - |N(I_c)| \geq |J| - |N(J)|$ then $I_c$ is called a \emph{critical independent set} and such a set of maximum cardinality is called a \emph{maximum critical independent set}.
\end{definition}

It is possible that the empty set is the only maximum critical independent set, as is the case when $G=K_n$ for all $n \geq 3$.  But if not, the Independence Decomposition Theorem of~\cite{Larson-11} produces a stable block.

\begin{proposition}
  \label{max-crit-indep-set}
  Let $I_c \subseteq G$ be a maximum critical independent set, and assume $I_c \neq \varnothing$. Then $X = I_c \cup N(I_c)$ is a stable block of $G$ or $X=V_G$.
\end{proposition}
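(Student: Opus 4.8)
The plan is to verify the defining condition of \cref{sb-def} directly for $X = I_c \cup N(I_c)$, namely $\alpha(G[X]) = \alpha(G[\mathrm{int}(X)])$ --- the case $X = V_G$ being exactly the alternative recorded in the statement, since a stable block must be a \emph{proper} subset of $V_G$. The one nonformal ingredient will be the Independence Decomposition Theorem of \cite{Larson-11}, which asserts that $I_c$ is in fact a \emph{maximum} independent set of the induced subgraph $G[X]$, so that $\alpha(G[X]) = |I_c|$; everything else is bookkeeping about the border of $X$.

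First I would identify $\mathrm{bord}(X)$. Each vertex $v \in I_c$ has all of its $G$-neighbors inside $N(I_c) \subseteq X$, so $v$ is adjacent to nothing outside $X$ and hence $v \notin \mathrm{bord}(X)$. Thus $\mathrm{bord}(X) \subseteq N(I_c)$ and, crucially, $I_c \subseteq \mathrm{int}(X)$. Since $I_c$ is independent, this already gives
\[
  \alpha\bigl(G[\mathrm{int}(X)]\bigr) \;\ge\; |I_c|,
\]
and in particular $\mathrm{int}(X)$ is nonempty; also $X \neq \varnothing$ because $I_c \neq \varnothing$.

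Then I would close the estimate from above. On one side, $\mathrm{int}(X) \subseteq X$ forces $\alpha(G[\mathrm{int}(X)]) \le \alpha(G[X])$; on the other, $\alpha(G[X]) = |I_c|$ by the Independence Decomposition Theorem. Chaining these,
\[
  |I_c| \;\le\; \alpha\bigl(G[\mathrm{int}(X)]\bigr) \;\le\; \alpha\bigl(G[X]\bigr) \;=\; |I_c|,
\]
so all three quantities coincide; in particular $\alpha(G[X]) = \alpha(G[\mathrm{int}(X)])$. If $X \subsetneq V_G$, then $X$ is a nonempty proper subset of $V_G$ satisfying this equality and is therefore a stable block of $G$; otherwise $X = V_G$, the remaining possibility.

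The main obstacle is the equality $\alpha(G[X]) = |I_c|$: this is genuinely the content of \cite{Larson-11} --- that the maximum critical independent set $I_c$ is not merely contained in some maximum independent set of $G$, but is itself a maximum independent set of the subgraph it spans together with its neighborhood. I would simply invoke that theorem; a self-contained argument would essentially have to reproduce its core, the K\"onig--Egerv\'ary structure of $G[X]$, which I would not redo here.
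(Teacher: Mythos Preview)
Your proof is correct and follows essentially the same approach as the paper: invoke Larson's result that $I_c$ is a maximum independent set of $G[X]$, then use the chain $I_c \subseteq \mathrm{int}(X) \subseteq X$ to sandwich $\alpha(G[\mathrm{int}(X)])$ between $|I_c|$ and $\alpha(G[X]) = |I_c|$. The only difference is that you spell out explicitly why $I_c \subseteq \mathrm{int}(X)$, which the paper asserts without elaboration.
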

\begin{proof}
  At the start of the proof of Theorem $2.4$ in \cite{Larson-11}, Larson proves that $I_c$ is a maximum independent subset of $G[X]$, so $\alpha(G[X]) = |I_c|$.

  Since $\mathrm{int}(X) \subseteq X$, we know that $\alpha(G[\mathrm{int}(X)]) \leq \alpha(G[X])$. Since $I_c \subseteq \mathrm{int}(X)$, we also know $\alpha(G[X]) \leq \alpha(G[\mathrm{int}(X)])$ and we conclude that if $X \subsetneq V_G$ then $X$ is a stable block in $G$.
\end{proof}

\begin{example}
\label{pineapple stable block}
Not every stable block arises as in~\cref{max-crit-indep-set}.  For instance, in the pineapple graphs $P_{m,n}=((K_1 \vee \compl{K}_m)+K_n)/\{u,v\}$, one can check that the vertices in $\compl{K}_m$ form the only critical independent set, so the vertices of $K_n$ form a stable block in $P_{m,n}$ that contains no maximum critical independent set, while the vertices in $K_1 \vee \compl{K}_m$ form a stable block that does arise as in~\cref{max-crit-indep-set}.  A similar situtation occurs if there is instead an edge between the central vertex in $K_1 \vee \compl{K}_m$ and one vertex in $K_n$ (the rooted product $K_2(K_1 \vee \compl{K}_m,K_n)$ - see~\cref{rp-def}).
\end{example}

Larson also proves that the set $X$ is independent of the choice of maximum critical independent set~\cite{Larson-11} and that a maximum critical independent set can be found in polynomial time~\cite{Larson-07}.  Hence when there exists a nonempty maximum critical independent set~\cref{max-crit-indep-set} provides an effective way to produce a stable block in a graph and use~\cref{stable-block-sens} to obtain an upper bound on sensitivity, which is most useful for proving a family is insensitive.

\subsection{Rooted products and coronas}
\label{RootedProducts}

Given a graph $G$ of order $n$ and a set of graphs $\mathcal{H}=\{H_1,\dots,H_n\}$ rooted at $v_1,\dots,v_n$ respectively, each root can be identified with a vertex of $G$ to obtain the \textit{rooted product $G(H_1,\dots,H_n)$ of $\mathcal{H}$ by $G$}. This construction was originally introduced in~\cite{Godsil-McKay-78} where the characteristic polynomial of its adjacency matrix was determined in terms of the characteristic polynomials for $H_1,\dots,H_n$.

\begin{definition}
  \label{rp-def}
  Consider a graph $G$ with vertices $u_1,\dots,u_n$ and a sequence $\mathcal{H}=H_1,\dots,H_n$ of graphs  rooted at the vertices $v_1,\dots,v_n$ respectively.  The \textit{rooted product} of $G$ with $\mathcal{H}$ is denoted $G(\mathcal{H})$ or $G(H_1,\dots,H_n)$ and is obtained by identifying pairs of vertices $u_i$ and $v_i$ for $1 \leq i \leq n$ in the disjoint union $G+H_1+\cdots +H_n$.
  When $H_1=H_2=\cdots=H_n=H$ we write $G(\mathcal{H})$ as $G(H)$.
\end{definition}
  
The construction in~\cref{identify point} and~\cref{gluing-root-sens} identifying vertices in two disjoint graphs $G$ and $H$ is equivalent to the rooted product $G(H,K_1,\dots,K_1)$.  We can repeatedly apply~\cref{gluing-root-sens} using a sequence of graphs $H_1, H_2, \dots, H_n$, resulting in an upper bound on the sensitivity of general rooted products and a useful tool for producing insensitive families or proving insensitivity.

\begin{corollary}
  \label{rooted-product}
  Let $G$ be a graph with $n$ vertices, let $1 \leq k \leq n$, and let $H_1,\dots,H_k$ be graphs, with $H_i$ rooted at a vertex $v_i$ that is not contained in some maximum independent subset of $H_i$ for $1 \leq i \leq k$. Assume that, for each $i$, the subgraph of $G(H_1,\dots,H_k,K_1,\dots,K_1)$ induced by $\mathrm{int}(H_i)$ is nonempty. Let $\sigma_i=\sigma(G[\mathrm{int}(H_i)])$ be the sensitivity of the subgraph induced by $\mathrm{int}(H_i)$. Then
  \[\sigma(G(H_1,\dots,H_k,K_1,\dots,K_1)) \leq \min_{1 \leq i \leq k}\{\sigma_i\}.\]
\end{corollary}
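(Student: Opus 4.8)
The plan is to iterate \cref{gluing-root-sens} one index at a time, peeling off the graphs $H_1,\dots,H_k$ successively and checking at each stage that the relevant induced subgraph remains a stable block. Fix $i$ with $1 \le i \le k$; I will show $\sigma(G(H_1,\dots,H_k,K_1,\dots,K_1)) \le \sigma_i$, and then the conclusion follows by taking the minimum over $i$.

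For a fixed $i$, write $R = G(H_1,\dots,H_k,K_1,\dots,K_1)$ and observe that $R$ can be viewed as a vertex identification of two rooted graphs: namely $H_i$ rooted at $v_i$, and the graph $R' = G(H_1,\dots,\widehat{H_i},\dots,H_k,K_1,\dots,K_1)$ (with $H_i$ replaced by $K_1$) rooted at the vertex $u_i$ of $G$. Concretely, $R = (H_i + R')/\{v_i,u_i\}$. To apply \cref{gluing-root-sens} with $H = H_i$ and $H' = R'$ I need two hypotheses: first, that $H_i$ has a maximum independent set avoiding $v_i$ — this is assumed in the statement; second, that $R[\operatorname{int}(H_i)]$ is nonempty — this is also assumed. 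Note that in $R$ the subgraph $R[V_{H_i}]$ is exactly a copy of $H_i$ with border precisely $\{v_i\}$ (the identification with $u_i$ is the only place $H_i$ meets the rest of $R$, since the roots of distinct $H_j$ are identified with distinct vertices $u_j$ of $G$), so $\operatorname{bord}(V_{H_i}) = \{v_i\}$ and $\operatorname{int}(V_{H_i}) = V_{H_i}\setminus\{v_i\}$, which matches the index set $\operatorname{int}(H_i)$ used in the statement. \cref{gluing-root-sens} then gives $\sigma(R) \le \sigma(R[\operatorname{int}(H_i)]) = \sigma(G[\operatorname{int}(H_i)]) = \sigma_i$, where the middle equality holds because $R[\operatorname{int}(H_i)]$ is an induced subgraph of $R$ supported on $V_{H_i}\setminus\{v_i\}$, which carries exactly the edges of $G[\operatorname{int}(H_i)]$ — the rest of $R$ contributes no edges among those vertices.

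The main subtlety — and the step I would write out most carefully — is verifying that the border of $V_{H_i}$ inside $R$ really is just $\{v_i\}$, i.e. that no vertex of $\operatorname{int}(H_i)$ acquires a neighbor outside $V_{H_i}$ through the other identifications or through edges of $G$. This is where the hypothesis that the roots $v_1,\dots,v_k$ are pairwise distinct vertices of $G$ (built into the rooted product definition, \cref{rp-def}) is used: the only vertex of the copy of $H_i$ that is glued to anything is $v_i$, and the edges of $G$ only touch the vertices $u_1,\dots,u_n = v_1,\dots,v_n$ (after identification), never the interior vertices of any $H_j$. Once this is clear, the argument is simply \cref{gluing-root-sens} applied $k$ times, once per index, and taking the minimum of the resulting upper bounds.
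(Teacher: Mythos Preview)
Your proposal is correct and follows the paper's intended approach: the paper's proof consists of the single sentence ``We can repeatedly apply \cref{gluing-root-sens} using a sequence of graphs $H_1,\dots,H_n$,'' and you have supplied exactly the details this requires, including the key verification that $\mathrm{bord}(V_{H_i})=\{v_i\}$ in $R$. A minor remark: your opening line promises to ``peel off the $H_i$ successively,'' but what you actually do (and what suffices) is, for each fixed $i$, a \emph{single} application of \cref{gluing-root-sens} to the decomposition $R=(H_i+R')/\{v_i,u_i\}$; you might rephrase the opening to match.
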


\begin{remark}
Since in applying~\cref{gluing-root-sens} to prove~\cref{rooted-product} the location of the distinguished vertex in the prior result does not matter,  the same bound will also hold if the vertex being identified  with $v_i$ in $H_i$ were in one of $H_1,\dots,H_{i-1}$, and for arbitrarily large $k$.  This yields the same bound in a much wider range of constructions, for which we do not attempt to devise notation.  For example, the sensitivity will be $1$ for any tree built by starting with a path and successively identifying an endpoint of a new path to a prior vertex, provided at least one of the paths used has length at least $4$. 
\end{remark}

\begin{example}
  For any graph $G$ with one or more distinguished vertices, using any $1$-cone $H$ whose interior is nonempty results in a wide variety of examples by applying~\cref{rooted-product} at the chosen vertices of $G$.  For instance, if $H$ is the join of $K_1$ and $K_n$, $P_n$, or $C_n$ for $n \geq 2$, $n \geq 4$, or $n \geq 5$ respectively, as in~\cref{Kn-Pn-Cn}, then the resulting graph will have sensitivity $1$, as will any tree built by successive vertex identification involving at least one path of length at least $4$. In particular the \textit{royal petunia graphs} of ~\cite{Latyshev-Kokhas-23} contain a copy of $K_1 \vee P_n$ as a stable block, and thus have sensitivity $1$.
\end{example}

If we identify a copy of $K_1 \vee H$ rooted at the vertex in $K_1$ to each vertex in $G$ then the resulting rooted product is called the \textit{corona} $G \odot H$ of $G$ and $H$. 
The corona operation was introduced in~\cite{Frucht-Harary-70} to produce a graph whose automorphism group could be easily described as a wreath product involving the automorphism groups of the component graphs.

\begin{definition}\label{corona-def}
  The \textit{corona} $G \odot H$ of a graph $G$ using another graph $H$ is the graph obtained by associating a copy $H_v$ of $H$ to each vertex $v$ of $G$, and adding edges from $v$ to each vertex of $H_v$.
\end{definition}

\begin{example}
\label{corona-inequality}
If $H$ is nonempty then $G \odot H=G(K_1 \vee H)$ will satisfy the hypotheses of~\cref{rooted-product} and each copy of $K_1 \vee H$ will be a stable block in $G \odot H$, so $\sigma(G \odot H) \leq \sigma(K_1 \vee H) = \sigma(H)$.
For example, the Type III windmill graphs in~\cite{Kooij-19} are $K_m \odot K_n$ and hence by~\cref{rooted-product} have sensitivity $\sigma(K_m \odot K_n)=1$. Since for $m \geq 2$ we have 
\[\Delta(K_m \odot K_n)=\max\{\Delta(K_m)+|V_{K_n}|,\Delta(K_n)+1\}=m-1+n,\]
all subfamilies are insensitive.
\end{example}

We will give $\sigma(G \odot H)$ precisely, even when $H$ is empty, in~\cref{corona-sens}.

\subsection{Stable block decomposition}
\label{StableBlockDecomp}

In contrast to modules and a maximal modular partition, stable blocks do not have to exist and can intersect, so a partition of a given graph into stable blocks might not exist.  However in the case where there is a full decomposition of a graph into stable blocks we obtain precise information about its sensitivity in terms of that of the blocks.  We also describe a construction that produces families to which~\cref{stable-block-partition} does apply.  The main idea is that if the pieces do not interact too much then they determine the sensitivity.

\begin{theorem}\label{stable-block-partition}
  If the vertex set of a graph $G$ can be partitioned into stable blocks $V_1,\dots,V_n$ such that $G[V_i]$ is nonempty for $1 \leq i \leq n$, then 
  \[\sigma(G)=\min\limits_{1\leq i \leq n} \sigma (G[V_i]).\]
\end{theorem}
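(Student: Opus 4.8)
The plan is to prove the two inequalities $\sigma(G) \le \min_{1 \le i \le n} \sigma(G[V_i])$ and $\sigma(G) \ge \min_{1 \le i \le n} \sigma(G[V_i])$ separately, after first establishing the crucial preliminary fact that a stable-block partition is additive for the independence number, namely $\alpha(G) = \sum_{i=1}^{n} \alpha(G[V_i])$. The inequality $\alpha(G) \le \sum_i \alpha(G[V_i])$ holds for any partition of $V_G$, since a maximum independent set of $G$ meets each $V_i$ in an independent set of $G[V_i]$. For the reverse inequality I would, in each block, pick a maximum independent set $A_i$ of $G[\mathrm{int}(V_i)]$; because $V_i$ is a stable block we have $|A_i| = \alpha(G[\mathrm{int}(V_i)]) = \alpha(G[V_i])$, and because every vertex of $A_i$ is an interior vertex it has no neighbor outside $V_i$, so $\bigcup_i A_i$ is an independent set of $G$ of size $\sum_i \alpha(G[V_i])$. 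This gives $\alpha(G) = \sum_i \alpha(G[V_i])$, and also records the observation -- used repeatedly -- that an interior vertex of one block is nonadjacent to every other block.

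For the lower bound, let $S \subseteq V_G$ with $|S| > \alpha(G)$ and set $S_i = S \cap V_i$. If $|S_i| \le \alpha(G[V_i])$ held for every $i$, then $|S| = \sum_i |S_i| \le \sum_i \alpha(G[V_i]) = \alpha(G)$, a contradiction; hence $|S_i| > \alpha(G[V_i])$ for some $i$. Then $G[S_i]$ is an induced subgraph of $G[V_i]$ on more than $\alpha(G[V_i])$ vertices, so $\Delta(G[S_i]) \ge \sigma(G[V_i])$, and since $S_i \subseteq S$ we get $\Delta(G[S]) \ge \Delta(G[S_i]) \ge \sigma(G[V_i]) \ge \min_j \sigma(G[V_j])$. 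Taking the minimum over all such $S$ gives $\sigma(G) \ge \min_j \sigma(G[V_j])$.

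For the upper bound, fix $i$ with $G[V_i]$ nonempty (if every block is empty then $G$ is empty and both sides are $\infty$ by convention). Using the remark that $\sigma$ is witnessed by a set of size exactly $\alpha+1$, choose $T \subseteq V_i$ with $|T| = \alpha(G[V_i]) + 1$ and $\Delta(G[T]) = \sigma(G[V_i])$, and for each $j \ne i$ choose a maximum independent set $A_j$ of $G[\mathrm{int}(V_j)]$ as above. Let $W = G\bigl[T \cup \bigcup_{j \ne i} A_j\bigr]$; then $|V_W| = (\alpha(G[V_i]) + 1) + \sum_{j \ne i} \alpha(G[V_j]) = \alpha(G) + 1$. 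Every $A_j$ is independent, and each interior vertex of $V_j$ is nonadjacent to $V_i \supseteq T$ and to $A_{j'}$ for $j' \ne j$; hence the only edges of $W$ lie inside $T$, so $\Delta(W) = \Delta(G[T]) = \sigma(G[V_i])$. Therefore $\sigma(G) \le \sigma(G[V_i])$ for every such $i$, and since empty blocks contribute $\sigma(G[V_i]) = \infty$, we conclude $\sigma(G) \le \min_j \sigma(G[V_j])$. Combining the two inequalities proves the theorem.

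The proof is short, and I expect the only genuine point requiring care to be the additivity $\alpha(G) = \sum_i \alpha(G[V_i])$ together with its proof ingredient -- that a vertex in $\mathrm{int}(V_j)$ has no neighbor in any $V_i$ with $i \ne j$ -- which is exactly what decouples the blocks and is used in both directions; the rest (pigeonhole for the lower bound, the explicit $W$ for the upper bound, and the bookkeeping for empty blocks and the $\sigma(\compl{K}_n) = \infty$ convention) is routine. An alternative for the upper bound would be to invoke \cref{stable-block-sens}, but that yields $\sigma(G) \le \sigma(G[\mathrm{int}(V_i)])$ rather than the needed $\sigma(G) \le \sigma(G[V_i])$, so I would keep the direct construction above.
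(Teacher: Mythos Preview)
Your proof is correct and follows essentially the same approach as the paper: establish additivity $\alpha(G)=\sum_i\alpha(G[V_i])$ via maximum independent sets chosen in the interiors, then use pigeonhole for the lower bound and the explicit witness $T\cup\bigcup_{j\ne i}A_j$ for the upper bound. Your handling of the empty-block edge case and your remark on why \cref{stable-block-sens} alone does not suffice are nice additions, but the core argument is the same.
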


\begin{proof}
  Let $V_1,\dots,V_n$ be a partition of the vertex set of $G$ into stable blocks such that for $1 \leq i \leq n$ we have $G_i := G[V_i]$ nonempty, and let $A_i$ be a maximum size independent set in $\mathrm{int}(G_i)$.  Suppose without loss of generality that $\sigma(G_1)=\min\limits_{1\leq i \leq n} \sigma (G_i)$.
  Clearly $\bigcup\limits_{i=1}^n A_i$ is an independent subset of $G$ of order $\sum\limits_{i=1}^n \alpha(G_i)$,
  but by the pigeonhole principle any larger subset intersects some $G_i$ in at least $\alpha(G_i)+1$ vertices, and thus is not independent in $G$. Hence $\alpha(G) = \sum\limits_{i=1}^n \alpha(G_i)$.

  Consider an induced subgraph $H$ of $G$ of order $1+\sum\limits_{i=1}^n \alpha(G_i)$.
  Since there is some $G_i$ that contains at least $\alpha(G_i)+1$ vertices of $H$, we know $\Delta(H) \geq \sigma(G_i) \geq \sigma(G_1)$ and thus $\sigma(G) \geq \sigma(G_1)$.

  Next, let $H_1$ be an induced subgraph of $G_1$ with $\Delta(H_1)=\sigma(G_1)$.  Consider the induced subgraph of $G$ consisting of $H_1$ together with $A_2,\dots,A_k$.  The only edges in that induced subgraph are between vertices in $H_1$ so the maximum degree is $\sigma(G_1)$.  Thus $\sigma(G)\leq \sigma(G_1)$, and we have shown $\sigma(G)=\min\limits_{1\leq i \leq n} \sigma (G_i)$.
\end{proof}

There is a natural way to construct graphs that decompose into stable blocks as in \cref{stable-block-partition}. Let $G$ be a graph, and for each vertex $v_i$ with $1\leq i \leq k$ of $G$ let $H_i$ be a nonempty graph corresponding to $v_i$ with vertex set $V_i$ and edge set $E_i$. Fix a maximum independent subset $A_i$ of each $H_i$. Now build a graph from $G$ and the $H_i$ so that the following properties hold:
\begin{enumerate}
  \item its vertex set is $\bigcup\limits_{i=1}^k V_i$\,,
  \item its edge set contains $\bigcup\limits_{i=1}^k E_i$\,, and
  \item if $v_i$ and $v_j$ are adjacent in $G$, then there exist vertices $u_i \in H_i \setminus A_i$ and $u_j \in H_j \setminus A_j$ which are adjacent in the resulting graph
\end{enumerate}
Then $V_1,\dots,V_k$ is a partition of the vertex set of the resulting graph into stable blocks.
Using this construction and examples in~\cref{joins} for which sensitivity is known one can build  graphs and graph families with known sensitivity.  By comparison it is not clear how the sensitivity of a fully generalized join depends on the sensitivities of its component graphs and this would be unreasonable to expect since any graph can be expressed as a generalized join in many ways.

In~\cref{corona-inequality}, we applied \cref{rooted-product} to see that $\sigma(G \odot H)$ is bounded above by $\sigma(H)$ whenever $H$ is nonempty.  But since the corona does have a partition into stable blocks even when $H$ is empty,~\cref{stable-block-partition} yields the following stronger result.

\begin{corollary}
\label{corona-sens}
  For graphs $G$ and $H$, 
  \[\sigma(G \odot H) = \sigma(K_1 \vee H)= \sigma(H).\]
\end{corollary}

\begin{proof}
  Observe that even when $H$ is empty, each copy of $K_1 \vee H$ is a stable block that induces a nonempty graph in the corona $G \odot H=G(K_1 \vee H)$. Since the copies of $K_1 \vee H$ form a partition of the vertices of $G \odot H$, by \cref{stable-block-partition} we have $\sigma(G \odot H) = \sigma(K_1 \vee H)$, which equals $\sigma(H)$.
\end{proof}

Using the examples in~\cref{defs} and~\cref{joins}, one can construct many different families of coronas and~\cref{corona-sens} reduces the computation of the sensitivity of even a complicated corona $G \odot H$ to the computation of $\sigma(H)$. For example, 
$\sigma(K_{1,1,\ell}\odot K^m_n)=\left\lceil\frac{n+1}{2}\right\rceil$ and $\sigma(K^m_n \odot K_{1,1,\ell})=\ell$, and 
if $G$ is any graph then $\sigma(G \odot K_n) = 1$ and $\sigma(G \odot \compl{K}_n) = n$.  These can also be used successively to build interesting families of coronas.  For instance, given a sequence of positive integers $n_1,n_2,\dots$ one could successively take the corona with $\compl{K}_{n_i}$ to produce a family exhibiting that sequence of sensitivities.  As a final particular example, for the successive corona family of complete regular bipartite graphs it is immediate by~\cref{corona-sens} that the sensitivity is
\[\sigma(((K_1^2 \odot K_2^2) \odot K_3^2) \cdots )\odot K_n^2) = \left\lceil \frac{n+1}{2} \right\rceil.\]

\section{Concluding remarks}\label{Concluding remarks}

We have described sensitivity under join and a partition into stable blocks and used our results to compute the sensitivity of several specific graph families and produce general constructions of sensitive and insensitive families.  
We note the following decomposition perspective on the results in~\cref{joins} in order to frame further directions related to expressing the sensitivity of other types of generalized joins in terms of the components. 

In a generalized join $G[H_1,\dots,H_n]$, each subgraph induced by $H_i$, $1 \leq i \leq n$ is a \textit{module}, i.e., a set of vertices that all have the same neighbors outside $H_i$.  Thus $H_1,\dots,H_n$ forms a modular partition of the full graph, and conversely any modular partition of a graph can be used to express it as a generalized join. In general it is possible to partition the vertices of a graph into modules in multiple ways.
The \textit{modular decomposition tree}
of a graph $G$ exhibits the nesting of its \textit{strong} modules (see~\cite{HMSZ-24}), uniquely determines $G$ (see~\cite{Brandstadt-Le-Spinrad-99}), and the maximal proper modules in that tree form a partition of the vertex set of $G$ that is referred to as a maximal modular partition of $G$ (see~\cite{Habib-Paul-10}).  
Reference~\cite{Brandstadt-Le-Spinrad-99} contains definitions, a list of several other terms that have been used for similar ideas in a variety of contexts, and precise statements of a modular decomposition theorem and related results.
For a brief primer and an interesting new generalization see~\cite{HMSZ-24} and for further details and a reference list see~\cite{Habib-Paul-10}.

When $G$ has more than one connected component $H_1,\dots,H_n$ the unique maximal modular partition is the \textit{parallel decomposition} $G=\compl{K}_n[H_1,\dots,H_n]$, for which we described the sensitivity in~\cref{genwindmill}.  When the complement $\compl{G}$ has more than one connected component $H_1,\dots,H_n$ then
$G=K_n[H_1,\dots,H_n]$ is the \textit{series decomposition}, for which we described sensitivity in the lexicographic product case, $K_n \circ H$, in~\cref{series}.
When $G$ and $\compl{G}$ are connected, then for a maximal modular partition $H_1,\dots,H_n$ of $G$ we can replace each $H_i$ with a single vertex and obtain a \textit{quotient} $Q$ that has no nontrivial modules and can be used to write $G$ as a generalized join $G=Q[H_1,\dots,H_n]$.  
For example, in the parallel and series decompositions the graphs $\compl{K}_n$ and $K_n$ are quotients of $G$ by $H_1,\dots,H_n$. This suggests limitations on finding the sensitivity of a generalized join explicitly in terms of its components, since in full generality this would amount to finding the sensitivity of a general graph.

To extend the range of examples and constructions it would help to expand the number of families for which the $k$-sensitivity is known or to develop other methods to understand $k$-sensitivity. The generalized join construction in~\cref{joins} and related maximal modular partition offer different possibilities and limitations compared to the stable block decomposition and construction in~\cref{stable blocks}, leading to several possible questions and further directions for investigation.

\begin{enumerate}
\item We list some questions related to generalized joins:
\begin{enumerate}
\item Is it practical to describe sensitivity in the series case, $\sigma(K_n[H_1,\dots,H_n])$, in terms of sensitivities of $H_1,\dots,H_n$ without the assumption that $H_1=H_2=\cdots=H_n$ as in~\cref{series}? 
\item If it is possible to completely describe $\sigma(K_n[H_1,\dots,H_n])$, can that be used together with the parallel case $\sigma(\compl{K}_n[H_1,\dots,H_n])$ in~\cref{genwindmill} to understand sensitivity of \textit{complement reducible graphs} or \textit{cographs}, which are those whose modular decomposition tree only contains series and parallel nodes, called a \textit{cotree}?
\item Are there other specific cases of generalized joins for which sensitivity can be computed?
\end{enumerate}

\item We list some questions related to stable blocks:
\begin{enumerate}
\item Are there other classes of stable blocks that do not arise as the union of a critical independent set and its neighbors beyond cases like $K_n$ in~\cref{pineapple stable block}? If so, can such stable blocks be described or characterized more generally (like in~\cref{max-crit-indep-set})?
\item Given a rooted product $G(H_1,\dots,H_n)$, how can we distinguish different types of stable blocks within it?
\item Can~\cref{stable-block-partition} be generalized to find sensitivity in terms of stable blocks when the stable blocks do not form a partition but are close to doing so?
\end{enumerate}
\end{enumerate}

More broadly it would be of interest to explore potential applications of sensitivity and connections with other graph invariants, find approximations to sensitivity that are of interest and are easier to compute, and investigate sensitivity under other types of products and operations.

\section{Acknowledgements}
We thank Davis Bolt and Gabriel Ory for computing support carried out in~SageMath~\cite{sagemath}.  The Idaho State University Career Path Internship Program provided funding for that computing support and some work by Jacob Tolman.

\section{Data Availability}
Code to compute graph sensitivity produced by Davis Bolt, code to generate graph families and sensitivity data produced by Gabriel Ory, and output for several graph families, are available at https://github.com/gabrielory/Graph-Sensitivity.

\bibliographystyle{plain}
\bibliography{Sensitivity}

\typeout{get arXiv to do 4 passes: Label(s) may have changed. Rerun}

\end{document}